\theoremstyle{plain}
\newtheorem{theorem}{Theorem}[section]
\newtheorem{lemma}[theorem]{Lemma}
\newtheorem{corollary}[theorem]{Corollary}
\newtheorem*{theorem-nn}{Theorem}
\newtheorem*{corollary-nn}{Corollary}
\theoremstyle{definition}
\newtheorem{remark}[theorem]{Remark}
\newcommand{\bZ}{\mathbb{Z}}
\newcommand{\bQ}{\mathbb{Q}}
\newcommand{\opi}{\overline{\pi}}
\title[On the simplest sextic fields and related Thue equations]
{On the simplest sextic fields and related Thue equations}
\author{Akinari Hoshi}
\subjclass[2000]{Primary 11D41, 11D59, 11R20, 11Y40, 12F10.}
\keywords{Sextic Thue equations, simplest sextic fields, field isomorphism problem, 
multi-resolvent polynomial}
\thanks{This work was partially supported by Rikkyo University Special Fund for Research 
and by the Grant-in-Aid for Young Scientists (B) No. 22740028, 
The Ministry of Education, Culture, Sports, Science and Technology, Japan.}
\begin{document}
\begin{abstract}
We consider the parametric family of sextic Thue equations
\[
x^6-2mx^5y-5(m+3)x^4y^2-20x^3y^3+5mx^2y^4+2(m+3)xy^5+y^6=\lambda
\]
where $m\in\mathbb{Z}$ is an integer and $\lambda$ is a divisor of $27(m^2+3m+9)$. 
We show that the only solutions to the equations are the trivial ones with 
$xy(x+y)(x-y)(x+2y)(2x+y)=0$. 
\end{abstract}
\maketitle

%%%%%%%%%%%%%%%%%%%%%%%%%%%%%%%%%%%%%%%%%%%%%%%%%%%%%%%%%%%%%%%%%%%%%%%%%%%%%%%%%%%%%%%%%%%%
%
\section{Introduction}
%
%%%%%%%%%%%%%%%%%%%%%%%%%%%%%%%%%%%%%%%%%%%%%%%%%%%%%%%%%%%%%%%%%%%%%%%%%%%%%%%%%%%%%%%%%%%%

We consider the following ``simple'' family of sextic Thue equations
\begin{align}
F_m(X,Y):=&\ X^6-2mX^5Y-5(m+3)X^4Y^2\label{eqFm}\\
&-20X^3Y^3+5mX^2Y^4+2(m+3)XY^5+Y^6=\lambda\nonumber
\end{align}
for $m$, $\lambda\in\bZ$ with $\lambda\neq 0$. 
We may assume that $m\geq -1$ because if $F_m(x,y)=\lambda$ then $F_{-m-3}(y,x)=\lambda$. 
If $(x,y)\in\bZ^2$ is a solution to (\ref{eqFm}) then 
\[
(x+y,-x),\ (y,-x-y),\ (-x,-y),\ (-x-y,x),\ (-y,x+y)
\]
are also solutions to (\ref{eqFm}) because $F_m(X,Y)$ is invariant under 
the action of the cyclic group $C_6=\langle\sigma\rangle$ of order $6$ where 
$\sigma : X\mapsto X+Y$, $Y\mapsto -X$. 
For sextic integer $\lambda=e^6$ or $\lambda=-27e^6$, the equation $F_m(X,Y)=\lambda$ has the 
following six solutions respectively 
\begin{align*}
F_m(0,\pm e)&=F_m(\pm e,0)=F_m(\pm e,\mp e)=e^6,\\
F_m(\pm e,\pm e)&=F_m(\pm 2e,\mp e)=F_m(\pm e,\mp 2e)=-27e^6.
\end{align*}
We call such solutions $(x,y)\in\bZ^2$ to $F_m(x,y)=\lambda$ with 
$xy(x+y)(x-y)(x+2y)(2x+y)=0$ the {\it trivial} solutions. 
We remark that $F_m(2x+y,-x+y)=-27F_m(x,y)$. 

For $m\geq 89$, Lettl-Peth\"o-Voutier \cite{LPV99} showed that the only 
primitive solutions $(x,y)\in\bZ^2$, i.e. $\mathrm{gcd}(x,y)=1$, to the 
Thue inequality $|F_m(x,y)|\leq 120m+323$ are 
\begin{align}
F_m(0,\pm 1)&=F_m(\pm 1,0)\hspace*{3.2mm}=F_m(\pm 1,\mp 1)=1,\label{eqtri}\\
F_m(\pm 1,\pm 1)&=F_m(\pm 2,\mp 1)=F_m(\pm 1,\mp 2)=-27\nonumber
\end{align}
(i.e. trivial solutions) and
\begin{align*}
F_m(\pm 1,\pm 2)&=F_m(\pm 3,\mp 1)=F_m(\pm 2,\mp 3)=120m+37,\\
F_m(\pm 2,\pm 1)&=F_m(\pm 3,\mp 2)=F_m(\pm 1,\mp 3)=-120m-323. 
\end{align*}
In \cite{LPV98}, moreover, they obtained that for any $m\in\bZ$ the equation 
$F_m(X,Y)=\lambda$ for $\lambda\in\{\pm 1,\pm 27\}$ has only twelve trivial solutions 
as in (\ref{eqtri}). 
A special case of $F_m(X,Y)=1$ is also studied by Togb\'{e} \cite{Tog02}. 
Wakabayashi \cite{Wak07b} investigated Thue inequalities 
$|F_{l,m}(x,y)|\leq k$ with two parameters $l,m$ and $F_{1,m}=F_m$. 
The following is the main result of this paper (cf. cubic case \cite{H1} 
and quartic case \cite{H2}): 
\begin{theorem}\label{thmain}
For $m\in\bZ$ and a divisor $\lambda$ of $27(m^2+3m+9)$, the only solutions to 
the equation $F_m(x,y)=\lambda$ are the trivial ones with $xy(x+y)(x-y)(x+2y)(2x+y)=0$. 
\end{theorem}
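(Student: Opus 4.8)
The plan is to read a solution of $F_m(x,y)=\lambda$ as an algebraic integer of controlled norm in the simplest sextic field and then to convert it, via a resolvent construction, into an isomorphism between two members of the family; the classification of such isomorphisms will force the solution to be trivial. First I would fix a root $\theta$ of $f_m(t):=F_m(t,1)$ and set $K_m=\bQ(\theta)$. Since $F_m(X,Y)=\prod_{i=0}^{5}(X-\theta_i Y)$ with the $\theta_i$ the full set of roots, any solution satisfies $N_{K_m/\bQ}(x-\theta y)=\lambda$. The stated $C_6$-invariance is the key structural input: the linear map $\sigma$ has $\sigma^3=-\mathrm{id}$, so on the roots it induces the order-three M\"obius transformation $\phi:\theta\mapsto -1/(1+\theta)$, whose orbits split the six roots into two triples. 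This is exactly the symmetry of a simplest cubic field, and I would use it to identify the cubic subfield $k\subset K_m$ as a simplest cubic field, so that $K_m/\bQ$ is the expected cyclic $C_6$-extension. The divisor bound is then natural: $m^2+3m+9=(m-3\omega)(m-3\overline{\omega})$ is the discriminant-datum of $k$, and the extra factor $27=3^3$ accounts for the quadratic layer, so $27(m^2+3m+9)$ is precisely the quantity governing the ramified and index-dividing primes of $K_m$.

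Next I would exploit the symmetries to normalize. The six solutions listed in the introduction form a single $C_6$-orbit, the substitution $m\mapsto -m-3$, $(x,y)\mapsto(y,x)$ lets me assume $m\geq -1$, and the identity $F_m(2x+y,-x+y)=-27F_m(x,y)$ links the solutions at $\lambda$ and at $-27\lambda$, which together with a local analysis at $3$ lets me restrict the power of $3$ dividing $\lambda$; I may therefore reduce to primitive $(x,y)$ with $\gcd(x,y)=1$. At this point the ideal $(x-\theta y)$ is supported only on primes lying over the divisors of $27(m^2+3m+9)$. Factoring the norm through the cubic subfield, $N_{K_m/\bQ}(x-\theta y)=N_{k/\bQ}\!\bigl(N_{K_m/k}(x-\theta y)\bigr)$, I would reduce a large part of the analysis to the cubic Thue equation already solved in \cite{H1}, leaving the genuinely sextic contribution to be treated separately.

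The heart of the argument, in parallel with the cubic \cite{H1} and quartic \cite{H2} cases, is the correspondence between solutions and field isomorphisms. Using the multi-resolvent polynomial attached to $f_m$ and the $C_2\times C_3$ decomposition of the Galois action, I would attach to each primitive solution an explicit rational parameter $m'=m'(m,x,y)$ together with a Tschirnhausen transformation carrying a root of $f_m$ to a root of $f_{m'}$. The divisibility hypothesis $\lambda\mid 27(m^2+3m+9)$ is precisely what forces $m'\in\bZ$ and makes this transformation integral, so that every solution yields a $\bQ$-isomorphism $K_m\cong K_{m'}$ within the family. To finish, I would solve the field isomorphism problem for the simplest sextic fields: apart from the explicit finite symmetry already visible on the Thue equation (the $C_6$-action together with $m\mapsto -m-3$), distinct parameters give non-isomorphic fields. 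Hence $K_{m'}\cong K_m$ forces $m'$ to lie in this finite orbit, and tracing the corresponding $(x,y)$ back through the resolvent returns exactly the twelve trivial solutions with value $1$ or $-27$; any non-trivial solution would produce a spurious isomorphism, a contradiction.

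I expect the main obstacle to be the explicit multi-resolvent computation together with the control of the prime $3$. The sextic field sits in the tower $\bQ\subset(\text{quadratic})\subset k\subset K_m$, and the resolvent producing $m'$ must be compatible with this whole tower; proving its integrality requires tracking the factor $3^3$ and the ramification at $3$ carefully, since $3$ is the one prime whose behaviour is not uniform in $m$. A second concentration of work is the classification step, namely establishing pairwise non-isomorphism of the $K_m$ outside the finite symmetry orbit; and since the Lettl--Peth\"o--Voutier bound \cite{LPV99} only covers $m\geq 89$, the finitely many remaining small values of $m$ would have to be disposed of by a direct computation.
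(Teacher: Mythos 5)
Your overall architecture coincides with the paper's: a non-trivial solution $(x,y)$ of $F_m(x,y)=\lambda$ with $\lambda\mid 27(m^2+3m+9)$ is converted, via the multi-resolvent parameter
\[
N=m+\frac{(m^2+3m+9)\,xy(x+y)(x-y)(x+2y)(2x+y)}{F_m(x,y)}\ \in\ \bZ\setminus\{m,-m-3\},
\]
into a coincidence $L^{(6)}_N=L^{(6)}_m$ of splitting fields, and the theorem then follows from the statement that $L^{(6)}_m=L^{(6)}_n$ holds only for $n\in\{m,-m-3\}$. The integrality and divisibility bookkeeping you describe (the resultant/B\'ezout identity, the control of the prime $3$, reduction to $\gcd(x,y)=1$) is exactly what the paper carries out in the proof of Theorem \ref{th2}, so this half of your plan is sound, even if your side remark about reducing to a cubic Thue equation via $N_{K_m/k}$ does not literally work (the relative norm of $x-\theta y$ is not a linear form over the cubic subfield).

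The genuine gap is in the classification step. You assert that ``distinct parameters give non-isomorphic fields'' and propose to justify it by the Lettl--Peth\"o--Voutier bound for $m\ge 89$ plus a finite computation for small $m$. This fails twice over. First, \cite{LPV99} solves the Thue \emph{inequality} $|F_m(x,y)|\le 120m+323$, a bound linear in $m$, whereas the divisors $\lambda$ of $27(m^2+3m+9)$ that must be excluded grow quadratically in $m$ (already $\lambda=m^2+3m+9$ escapes their range for large $m$); moreover, invoking a Thue-equation result to prove the non-isomorphism statement would be circular with your own reduction, which derives the Thue result \emph{from} that statement. Second, pairwise non-isomorphism cannot be taken for granted: the analogous claim is \emph{false} one level down, since $L^{(3)}_{-1}=L^{(3)}_5=L^{(3)}_{12}=L^{(3)}_{1259}$, $L^{(3)}_0=L^{(3)}_3=L^{(3)}_{54}$, etc. The paper's actual route is: $L^{(6)}_m=L^{(6)}_n$ forces equality of the cubic subfields $L^{(3)}_m=L^{(3)}_n$, which by Okazaki's theorem (Theorem \ref{thOka}, the deep external input your sketch lacks) confines $m,n$ to an explicit eleven-element set; one then verifies by factoring the multi-resolvents $f^{C_6}_{A_i}(X)$ for those finitely many pairs (Table $2$) that none of the cubic coincidences lifts to a sextic one. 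Without Okazaki's theorem or an equivalent effective bound for the simplest cubic fields, your classification step, and hence the whole proof, is incomplete.
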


We take the simplest sextic polynomial 
\[
f^{C_6}_m(X):=X^6-2mX^5-5(m+3)X^4-20X^3+5mX^2+2(m+3)X+1\in \bQ[X]
\]
with discriminant $6^6(m^2+3m+9)^5$. 
Note that $f^{C_6}_m(X)=F_m(X,1)$. 

For $m\in\bZ\setminus\{-8,-3,0,5\}$, the polynomial $f^{C_6}_m(X)$ is irreducible 
over $\bQ$ with cyclic Galois group $\mathrm{Gal}_\bQ f^{C_6}_m(X)$ $\cong$ $C_6$ 
of order $6$ (see \cite[Proposition 3.3]{Gra86}). 
The splitting fields 
\[
L^{(6)}_m:=\mathrm{Spl}_\bQ f^{C_6}_m(X),\quad (m\in\bZ\setminus\{-8,-3,0,5\})
\]
of $f^{C_6}_m(X)$ over $\bQ$ are totally real cyclic sextic fields and 
called the simplest sextic fields 
(cf. e.g. \cite{Gra86}, \cite{Gra87}, \cite{LPV98}, \cite{LPV99}, 
\cite[Section 8.3]{Gaa02}, \cite{Tog02}, \cite{HH05}, \cite{Lou07}). 

We get Theorem \ref{thmain} as a consequence of the following two theorems: 

\begin{theorem-nn}[Theorem \ref{thisoC6}]
For $m,n\in\bZ$, $L^{(6)}_m=L^{(6)}_n$ if and only if $m=n$ or $m=-n-3$. 
\end{theorem-nn}

\begin{theorem-nn}[Theorem \ref{th2}]
There exists an integer $n\in\bZ\setminus\{m,-m-3\}$ such that 
$L^{(6)}_n=L^{(6)}_m$ if and only if there exists non-trivial solution 
$(x,y)\in\bZ^2$, i.e. $xy(x+y)(x-y)(x+2y)(2x+y)\neq 0$, to $F_m(x,y)=\lambda$ 
where $\lambda$ is a divisor of $27(m^2+3m+9)$. 
\end{theorem-nn}

In Section \ref{sesimsextic}, 
we review some facts on the simplest sextic and cubic fields. 
In Section \ref{seInt}, we recall known results of the resolvent polynomials 
which are fundamental tools in the computational aspects of Galois theory. 
We intend to explain how to obtain an answer to the field intersection 
problem of cyclic sextic polynomials $f_s(X)$ over a field $K$ of 
char $K\neq 2,3$, i.e. for $a,b\in K$ how to determine the intersection 
of $\mathrm{Spl}_K f_a(X)$ and $\mathrm{Spl}_K f_b(X)$. 
In Section \ref{seIso}, we give an explicit answer to the field isomorphism 
problem of $f_s^{C_6}(X)$ as the special case of the field intersection problem. 
In particular, for $K=\bQ$, we get Theorem \ref{thisoC6} by using Okazaki's theorem 
(Theorem \ref{thOka}). 
In Section \ref{secores}, we will show a correspondence between isomorphism 
classes of the simplest sextic fields $L^{(6)}_m$ and non-trivial solutions to 
the sextic Thue equations $F_m(x,y)=\lambda$ where $\lambda$ is a divisor of $27(m^2+3m+9)$ 
(see Theorem \ref{th2}). 

%%%%%%%%%%%%%%%%%%%%%%%%%%%%%%%%%%%%%%%%%%%%%%%%%%%%%%%%%%%%%%%%%%%%%%%%%%%%%%%%%%%%%%%%%%%%
%
\section{The simplest sextic and cubic fields}\label{sesimsextic}
%
%%%%%%%%%%%%%%%%%%%%%%%%%%%%%%%%%%%%%%%%%%%%%%%%%%%%%%%%%%%%%%%%%%%%%%%%%%%%%%%%%%%%%%%%%%%%

We recall known facts of the simplest sextic and cubic fields 
(see \cite[Section 3]{Gra86}, \cite{Gra87}, \cite{LPV98}). 

Let $K$ be a field of char $K\neq 2, 3$ and $K(s)$ the rational function 
field over $K$ with variable $s$. 
We take the simplest sextic polynomial 
\begin{align*}
f^{C_6}_s(X):=X^6-2sX^5-5(s+3)X^4-20X^3+5sX^2+2(s+3)X+1
\end{align*}
with discriminant $6^6(s^2+3s+9)^5$. 
The Galois group of $f^{C_6}_s(X)$ over $K(s)$ is isomorphic to the cyclic 
group $C_6$ of order $6$. 
Gras \cite{Gra86} considered the polynomial 
\[
g_t(X)=X^6-\frac{1}{2}(t-6)X^5-\frac{5}{4}(t+6)X^4-20X^3
+\frac{5}{4}(t-6)X^2+\frac{1}{2}(t+6)X+1.
\]
The two polynomials above are related by $g_{4s+6}(X)=f^{C_6}_s(X)$. 

Let $K(z)$ be the rational function field over $K$ with variable $z$ and 
$\sigma$ a $K$-automorphism of $K(z)$ of order $6$ which is defined by 
\begin{align}
\sigma : z\ \mapsto\ \frac{z-1}{z+2}\ \mapsto\ -\frac{1}{z+1}\ 
\mapsto\ -\frac{z+2}{2z+1}\ \mapsto\ -\frac{z+1}{z}\ \mapsto\ 
-\frac{2z+1}{z-1}\ \mapsto\ z.\label{actz} 
\end{align}
Then we get the Galois extension $K(z)/K(z)^{\langle\sigma\rangle}$ 
with cyclic Galois group $C_6$ of order $6$ and 
\begin{align*}
f^{C_6}_s(X)&=\prod_{x\in\mathrm{Orb}_{\langle\sigma\rangle}(z)}\Bigl(X-x\Bigr)
\end{align*}
where
\begin{align*}
s=\frac{z^6-15z^4-20z^3+6z+1}{z(2z^4+5z^3-5z-2)}
=\frac{(z^3+3z^2-1)(z^3-3z^2-6z+1)}{z(z+1)(z-1)(z+2)(2z+1)}
\end{align*}
as the generating polynomial of the sextic cyclic field $K(z)$ 
over $K(z)^{\langle\sigma\rangle}=K(s)$. 

The quadratic field $K(z)^{\langle\sigma^2\rangle}$ over $K(s)$ is given by 
$K(s)(z_2)$ where
\[
z_2=z+\sigma^2(z)+\sigma^4(z)=\frac{z^3-3z-1}{z(z+1)}. 
\]
It also follows from 
\begin{align}
z_2-s=\frac{(z^2+z+1)^3}{z(z+1)(z-1)(z+2)(2z+1)}=\sqrt{s^2+3s+9}\label{eqz2s}
\end{align}
that $K(z)^{\langle\sigma^2\rangle}=K(s)(\sqrt{s^2+3s+9})$. 

The cyclic cubic field $K(z)^{\langle\sigma^3\rangle}$ over $K(s)$ is given by 
$K(s)(z_3)$ where 
\[
z_3=\frac{1}{z\,\sigma^3(z)}=-\frac{z(z+2)}{(z+1)(z-1)}.
\]
The action of $\sigma$ on $K(s)(z_3)$ is given by 
\[
\sigma : s\ \mapsto\ s,\ z_3\ \mapsto\ -\frac{1}{z_3+1}\ \mapsto\ 
-\frac{z_3+1}{z_3}\ \mapsto\ z_3. 
\]
Hence the minimal polynomial of $z_3$ over $K(s)$ is given by 
\begin{align*}
f_s^{C_3}(X):=\prod_{x\in\mathrm{Orb}_{\langle\sigma\rangle}(z_3)}\Bigl(X-x\Bigr)
=X^3-sX^2-(s+3)X-1,
\end{align*}
that is the simplest cubic polynomial of Shanks \cite{Sha74}. 
Two polynomials $f^{C_6}_s(X)$ and $f_s^{C_3}(X)$ satisfy the relation 
\begin{align}
f^{C_6}_s(X)=(f_s^{C_3}(X))^2-(s^2+3s+9)X^2(X+1)^2.\label{eqC6C3}
\end{align}

For $K=\bQ$, we consider the specialization map $s\mapsto m\in\bZ$. 
By \cite[Proposition 3.3]{Gra86}, the sextic polynomial $f^{C_6}_m(X)$ is 
irreducible over $\bQ$ for $m\in\bZ\setminus\{-8,-3,0,5\}$, 
and the splitting fields $L^{(6)}_m=\mathrm{Spl}_\bQ f^{C_6}_m(X)$ are totally 
real cyclic number fields of degree $6$ which are called the simplest sextic fields. 
We see $L_m^{(6)}=L_{-m-3}^{(6)}$ because if $z$ is a root of $f^{C_6}_m(X)$ 
then $1/z$ becomes a root of $f^{C_6}_{-m-3}(X)$.

For $m\in\{-8,-3,0,5\}$, an integer $m^2+3m+9$ is square and then 
from (\ref{eqz2s}) and (\ref{eqC6C3}) the sextic polynomial 
$f^{C_6}_m(X)$ splits over $\bQ$. 
Indeed we see 
\begin{align*}
f^{C_6}_{-8}(X)&=f_{-1}^{C_3}(X)f_{-15}^{C_3}(X),& 
f^{C_6}_{-3}(X)&=f_{0}^{C_3}(X)f_{-6}^{C_3}(X),\\
f^{C_6}_{0}(X)&=f_{3}^{C_3}(X)f_{-3}^{C_3}(X),& 
f^{C_6}_{5}(X)&=f_{12}^{C_3}(X)f_{-2}^{C_3}(X). 
\end{align*}

The cubic subfields $L_m^{(3)}:=\mathrm{Spl}_\bQ f_m^{C_3}(X)$ of $L^{(6)}_m$ 
are called the simplest cubic fields. 
Note that $f_m^{C_3}(X)$ is irreducible over $\bQ$ 
and $L_m^{(3)}=L_{-m-3}^{(3)}$ for any $m\in\bZ$. 

Ennola \cite{Enn91} verified that for integers $-1\leq m<n\leq 10^4$, 
$L_m^{(3)}=L_n^{(3)}$ if and only if $(m,n)\in \{(-1,5)$, $(-1,12)$, $(-1,1259)$, 
$(5,12)$, $(5,1259)$, $(12,1259)\}$ $\cup$ $\{(0,3)$, $(0,54)$, $(3,54)\}$ 
$\cup$ $\{(1,66)\}$ $\cup$ $\{(2,2389)\}$. 
Hoshi-Miyake \cite[Example 5.3]{HM09a} checked that Ennola's claim is also 
valid for $-1\leq m<n\leq 10^5$. 

In \cite{Oka02}, Okazaki investigated Thue equations $F(X,Y)=1$ for irreducible 
cubic forms $F$ with positive discriminant $D(F)>0$ and established a very 
strong result on gaps between solutions (cf. also \cite{Wak07a}). 
By using methods in \cite{Oka02}, we may obtain that 
if $L^{(3)}_m=L^{(3)}_n$ with $-1\leq m<n$ then $m\leq 35731$ (cf. also \cite{H1}). 
Moreover Okazaki showed the following theorem: 
\begin{theorem}[Okazaki {\cite{Oka}}]\label{thOka}
Let $L_m^{(3)}=\mathrm{Spl}_\bQ f^{C_3}_m(X)$. 
For $m,n\in\bZ$ with $-1\leq m<n$, if $L^{(3)}_m=L^{(3)}_n$ then 
$m,n\in\{-1$, $0$, $1$, $2$, $3$, $5$, $12$, $54$, $66$, $1259$, $2389\}$. 
In particular, 
\[
L^{(3)}_{-1}=L^{(3)}_5=L^{(3)}_{12}=L^{(3)}_{1259},\quad 
L^{(3)}_0=L^{(3)}_3=L^{(3)}_{54},\quad 
L^{(3)}_1=L^{(3)}_{66},\quad 
L^{(3)}_2=L^{(3)}_{2389}. 
\]
\end{theorem}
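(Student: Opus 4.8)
The plan is to convert the field-coincidence condition into a family of cubic Thue equations, bound the solutions effectively by Baker's method sharpened through a gap principle, and then finish with a finite computation.

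First I would set up the Diophantine translation, in exact parallel with the sextic Theorem~\ref{th2} but in degree three. Since $f_m^{C_3}(X)=X^3-mX^2-(m+3)X-1$ has discriminant $(m^2+3m+9)^2$ and its roots are cyclically permuted by the order-three M\"obius map $X\mapsto -1/(X+1)$, a coincidence $L_m^{(3)}=L_n^{(3)}$ forces a root of $f_n^{C_3}$ to be a $\bQ$-rational function of a root of $f_m^{C_3}$. Homogenizing $f_m^{C_3}$ to the cubic form $G_m(X,Y)=X^3-mX^2Y-(m+3)XY^2-Y^3$ and tracking this rational transformation as in \cite{H1}, one obtains a correspondence between the values $n\neq m,-m-3$ with $L_n^{(3)}=L_m^{(3)}$ and the non-trivial integer solutions of $G_m(x,y)=\lambda$ for $\lambda\mid(m^2+3m+9)$. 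In the range $-1\le m<n$ the only ``trivial'' coincidence $L_m^{(3)}=L_{-m-3}^{(3)}$ cannot occur, because there $-m-3<m$; hence every coincidence to be classified is non-trivial and is accounted for by a solution of this Thue equation.

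Second, I would bound these solutions. Factoring $G_m(x,y)$ over $L_m^{(3)}$ and passing to the fundamental units turns the equation into a system of unit equations, to which Baker's theory of linear forms in logarithms applies; this yields an effective upper bound for $\max(|x|,|y|)$, hence for $n$ in terms of $m$, and ultimately for $m$. The raw Baker bound is far too large to search, so the essential sharpening is Okazaki's strong gap principle for cubic Thue equations $F(X,Y)=1$ with $D(F)>0$ from \cite{Oka02}: controlling the units of $L_m^{(3)}$ and the separation of its real embeddings, one shows that two large solutions must be multiplicatively far apart, so only finitely many lie below the Baker bound. Combining the gap estimate with the explicit linear-forms inequality collapses the admissible range to $-1\le m\le 35731$.

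Finally, on this finite range the assertion is checked directly. Ennola \cite{Enn91} verified all pairs $-1\le m<n\le 10^4$ and Hoshi-Miyake \cite{HM09a} extended the check to $10^5>35731$, which covers the theoretical bound; for each candidate $m$ one need only test the finitely many divisors $\lambda\mid(m^2+3m+9)$ for a second index $n$, and the search returns precisely the four coincidence classes recorded in the statement. The hard part is the second step: Baker's inequality alone is hopeless for pinning down the exact list, so everything hinges on Okazaki's gap principle holding uniformly across the whole family and being quantitatively strong enough to reach a computable range. The delicate point is a uniform-in-$m$ grip on the fundamental units of $L_m^{(3)}$, together with the exclusion of anomalously close solutions in the small-discriminant cases, which is precisely where the refined analysis of \cite{Oka02} becomes indispensable.
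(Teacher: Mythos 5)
The first thing to say is that the paper does not actually prove Theorem \ref{thOka}: it is imported from Okazaki \cite{Oka}, and the surrounding text only records that the methods of \cite{Oka02} yield the bound $m\le 35731$, that Ennola \cite{Enn91} and Hoshi--Miyake \cite{HM09a} verified the ranges $n\le 10^4$ and $n\le 10^5$ computationally, and that an alternative proof appears in \cite{H1}. Your outline --- translate $L_m^{(3)}=L_n^{(3)}$ into the cubic Thue equations $G_m(x,y)=\lambda$, use Okazaki's gap principle on top of Baker's method to force $m\le 35731$, then finish by a finite computation --- is precisely the strategy those references carry out, so you have reconstructed the intended proof rather than found a different route. (One small slip: in the cubic case the correct divisibility is $\lambda\mid 3(m^2+3m+9)$; the factor $3$ plays the role of the $27$ in the sextic Theorem \ref{th2} and comes out of the same resultant computation.)

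The one genuine gap in your write-up is the final step. The theorem asserts that \emph{both} $m$ and $n$ lie in the eleven-element list, but your argument only collapses the range of the smaller parameter $m$ to $[-1,35731]$, and the computations you then invoke check pairs with both indices at most $10^4$ (Ennola) or $10^5$ (Hoshi--Miyake). Nothing you have said excludes a coincidence $L_m^{(3)}=L_n^{(3)}$ with $m\le 35731$ but $n>10^5$: the effective bound on $n$ (equivalently on $\max(|x|,|y|)$) that you extract from Baker's method is, by your own admission, far too large to search, and the gap principle as you deploy it is used to bound $m$, not to bound the solutions for a fixed admissible $m$ by anything within the verified range. To close the argument one must, for each $m$ in the surviving range, actually solve the Thue equations $G_m(x,y)=\lambda$ for all divisors $\lambda$ (which is in effect what \cite{H1} and \cite{Oka} do), or else prove a second, searchable bound on $n$ in terms of $m$. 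As written, your last paragraph conflates ``checking pairs up to $10^5$'' with ``solving the Thue equations for each $m\le 35731$''; only the latter finishes the proof.
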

The author also gave an another proof of Theorem \ref{thOka} in \cite{H1}. 

%%%%%%%%%%%%%%%%%%%%%%%%%%%%%%%%%%%%%%%%%%%%%%%%%%%%%%%%%%%%%%%%%%%%%%%%%%%%%%%%%%%%%%%%%%%%
%
\section{Field intersection problem of cyclic sextic}\label{seInt}
%
%%%%%%%%%%%%%%%%%%%%%%%%%%%%%%%%%%%%%%%%%%%%%%%%%%%%%%%%%%%%%%%%%%%%%%%%%%%%%%%%%%%%%%%%%%%%

We recall some results of the resolvent polynomials which are fundamental tools 
in the computational aspects of Galois theory (cf. e.g. \cite{Coh93}, \cite{Coh00}, 
\cite{Ade01}). 
Let $\overline{K}$ be a fixed algebraic closure of a field $K$.
Let $f(X):=\prod_{i=1}^m(X-\alpha_i) \in K[X]$ be a separable polynomial of degree 
$m$ with some fixed order of the roots $\alpha_1,\ldots,\alpha_m\in \overline{K}$. 
The Galois group of the polynomial $f(X)$ over $K$ may be determined by resolvent 
polynomials with suitable invariants. 

Let $R:=K[x_1,\ldots,x_m]$ be the polynomial ring over $K$ with $m$ variables 
$x_1,\ldots,x_m$. 
For $\Theta \in R$, we take the specialization map 
$\omega_f : R \rightarrow k(\alpha_1,\ldots,\alpha_m)$, 
$\Theta(x_1,\ldots,x_m)\mapsto \Theta(\alpha_1,\ldots,\alpha_m)$. 
The kernel of $\omega_f$ is the ideal 
$I_f:=\{\Theta\in R \mid \Theta(\alpha_1,\ldots,\alpha_m)=0\}$ in $R$. 

Let $S_m$ be the symmetric group of degree $m$. 
We extend the action of $S_m$ on $m$ letters $\{1,\ldots,m\}$ to that on $R$ by 
$\pi(\Theta(x_1,\ldots,x_m)):=\Theta(x_{\pi(1)},\ldots,x_{\pi(m)})$. 
The Galois group of $f(X)$ over $K$ is defined by 
$\mathrm{Gal}(f/K):={\{\pi\in S_m \mid \pi(I_f)\subseteq I_f\}}$, 
and $\mathrm{Gal}(f/K)$ is isomorphic to the Galois group of the splitting field 
$\mathrm{Spl}_K f(X)$ of $f(X)$ over $K$. 
If we take another ordering of roots $\alpha_{\pi(1)},\ldots,\alpha_{\pi(m)}$ 
of $f(X)$ for some $\pi\in S_m$, the corresponding realization of 
$\mathrm{Gal}(f/K)$ is conjugate in $S_m$. 
Hence, for arbitrary ordering of the roots of $f(X)$, $\mathrm{Gal}(f/K)$ is 
determined up to conjugacy in $S_m$. 

For $H\leq U\leq S_m$, an element $\Theta\in R$ is called a 
$U$-primitive $H$-invariant if 
$H=\mathrm{Stab}_U(\Theta)$ $:=$ $\{\pi\in U\ |\ \pi(\Theta)=\Theta\}$. 
For a $U$-primitive $H$-invariant $\Theta$, the polynomial 
\[
\mathcal{RP}_{\Theta,U}(X)=\prod_{\opi\in U/H}(X-\pi(\Theta))\in R^U[X]
\]
where $\opi$ runs through the left cosets of $H$ in $U$, 
is called the {\it formal} $U$-relative $H$-invariant resolvent by $\Theta$. 
The polynomial 
\[
\mathcal{RP}_{\Theta,U,f}(X):=\omega_f(\mathcal{RP}_{\Theta,U}(X))
\]
is called the $U$-relative $H$-invariant resolvent of $f$ by $\Theta$. 
The following theorem is fundamental in the theory of resolvent polynomials 
(see e.g. \cite[p.95]{Ade01}). 

\begin{theorem}\label{thfun}
Let $G=\mathrm{Gal}(f/K)$, $H\leq U\leq S_m$ be finite groups with $G\leq U$ 
and $\Theta$ a $U$-primitive  $H$-invariant. 
Suppose that $\mathcal{RP}_{\Theta,U,f}(X)=\prod_{i=1}^l h_i^{e_i}(X)$ gives 
the decomposition of $\mathcal{RP}_{\Theta,U,f}(X)$ into a product of powers of 
distinct irreducible polynomials $h_i(X)$, $(i=1,\ldots,l)$, in $K[X]$. 
Then we have a bijection 
\begin{align*}
G\backslash U/H\quad &\longrightarrow \quad \{h_1^{e_1}(X),\ldots,h_l^{e_l}(X)\}\\
G\, \pi\, H\quad &\longmapsto\quad h_\pi(X)
=\prod_{\tau H\subseteq G\,\pi\,H}\bigl(X-\omega_{f}(\tau(\Theta))\bigr)
\end{align*}
where the product runs through the left cosets $\tau H$ of $H$ in $U$ contained 
in $G\, \pi\, H$, that is, through $\tau=\pi_\sigma \pi$ where $\pi_\sigma$ runs 
a system of representative of the left cosets of $G \cap \pi H\pi^{-1};$ each 
$h_\pi(X)$ is irreducible or a power of an irreducible polynomial with 
$\mathrm{deg}(h_\pi(X))$ $=$ $|G\, \pi\, H|/|H|$ $=$ $|G|/|G\cap \pi H\pi^{-1}|$. 
\end{theorem}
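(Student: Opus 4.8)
The plan is to read off the irreducible factorization of $\mathcal{RP}_{\Theta,U,f}(X)$ from the action of $G=\mathrm{Gal}(f/K)$ on its roots, and to match the resulting $G$-orbits with the double cosets in $G\backslash U/H$. First I would make the roots explicit: since $\Theta$ is a $U$-primitive $H$-invariant, the element $\pi(\Theta)$ depends only on the coset $\pi H$, so the roots of $\mathcal{RP}_{\Theta,U,f}(X)$ are exactly the values $\omega_f(\pi(\Theta))$ as $\pi H$ ranges over $U/H$, one linear factor per coset. The decisive point is the compatibility of $G$ with the specialization $\omega_f$: realizing $g\in G$ as the automorphism of $\mathrm{Spl}_K f(X)$ sending $\alpha_i\mapsto\alpha_{g(i)}$, a direct computation gives $g(\omega_f(\pi(\Theta)))=\omega_f(g\pi(\Theta))$. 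Hence $G$ permutes the root-cosets by $\pi H\mapsto g\pi H$, and the $G$-orbits on $U/H$ are precisely the double cosets $G\pi H$.

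Next I would fix the coefficient field. Because $G\le U$, every $U$-invariant is a $G$-invariant, and $\omega_f$ carries $G$-invariants into the fixed field of $G$ in $\mathrm{Spl}_K f(X)$, which is $K$; since the coefficients of $\mathcal{RP}_{\Theta,U}(X)$ lie in $R^U$, this shows $\mathcal{RP}_{\Theta,U,f}(X)\in K[X]$. Grouping its linear factors according to the $G$-orbits, i.e.\ by double coset, yields the factorization $\mathcal{RP}_{\Theta,U,f}(X)=\prod_{G\pi H}h_\pi(X)$ with $h_\pi(X)=\prod_{\tau H\subseteq G\pi H}(X-\omega_f(\tau(\Theta)))$, and each $h_\pi$ has a $G$-stable multiset of roots, hence $h_\pi(X)\in K[X]$.

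I would then identify each factor as a power of an irreducible and confirm the degree formula. Since $G$ acts transitively on the left cosets contained in one double coset $G\pi H$, the distinct values among $\{\omega_f(\tau(\Theta)):\tau H\subseteq G\pi H\}$ form a single $G$-orbit, and by equivariance every distinct value is attained by the same number $e_\pi$ of cosets; therefore $h_\pi(X)=q_\pi(X)^{e_\pi}$, where $q_\pi$ is the minimal polynomial over $K$ of any such value and is thus irreducible. The number of left cosets in $G\pi H$ equals $[G:G\cap\pi H\pi^{-1}]=|G\pi H|/|H|$, realized concretely by letting $\pi_\sigma$ run over a transversal of $G/(G\cap\pi H\pi^{-1})$ and putting $\tau=\pi_\sigma\pi$; this gives $\deg h_\pi=|G\pi H|/|H|=|G|/|G\cap\pi H\pi^{-1}|$.

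The double-coset factorization already shows that $G\pi H\mapsto h_\pi$ is well defined and surjects onto the set of factors, so the step I expect to be the main obstacle is \emph{injectivity}: I must rule out that $\omega_f(\pi(\Theta))$ and $\omega_f(\pi'(\Theta))$ lie in a common $G$-orbit when $\pi$ and $\pi'$ belong to different double cosets, since such a coincidence would fuse two double cosets into one irreducible factor and destroy the bijection. The natural way to control this is to work first with the formal resolvent over $R^G$, where the polynomials $\prod_{\tau H\subseteq G\pi H}(X-\tau(\Theta))$ are the pairwise coprime irreducible factors of $\mathcal{RP}_{\Theta,U}(X)$ over $\mathrm{Frac}(R^G)$, and then to check that $\omega_f$ preserves this coprimality, i.e.\ does not annihilate the relevant resultants. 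Establishing that all root coincidences are internal to a single double coset (so that the exponents $e_i$ absorb them, while distinct double cosets stay coprime) is the crux; once it is in hand, the correspondence $G\backslash U/H\to\{h_1^{e_1}(X),\ldots,h_l^{e_l}(X)\}$ is the desired bijection.
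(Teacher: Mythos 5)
The paper does not prove this theorem at all --- it is quoted from \cite{Ade01} --- so there is no internal proof to measure you against, and your proposal has to stand on its own. Most of it does: the identification of the roots of $\mathcal{RP}_{\Theta,U,f}(X)$ with the cosets $\pi H\in U/H$, the equivariance $\tilde g(\omega_f(\pi(\Theta)))=\omega_f(g\pi(\Theta))$, the resulting action of $G$ on $U/H$ with orbits the double cosets, the rationality of each $h_\pi$, the argument that transitivity of $G$ on the cosets inside $G\pi H$ forces $h_\pi=q_\pi^{e_\pi}$ with $q_\pi$ irreducible, and the degree count $|G\pi H|/|H|=|G|/|G\cap\pi H\pi^{-1}|$ are all correct and constitute the standard proof.

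The gap is exactly the step you defer, and your proposed way of closing it cannot work. You want to show that $\omega_f$ preserves the pairwise coprimality of the formal factors $\prod_{\tau H\subseteq G\pi H}(X-\tau(\Theta))$, i.e.\ that specialization never identifies values coming from different double cosets. That is false in general. Take $K=\bQ$, $f(X)=(X^2-2)(X^2-8)$ with $\alpha_1=\sqrt2$, $\alpha_2=-\sqrt2$, $\alpha_3=2\sqrt2$, $\alpha_4=-2\sqrt2$, so that $G=\{1,(1\,2)(3\,4)\}\leq U=S_4$, and let $\Theta=x_1+x_3$, which is a $U$-primitive $H$-invariant for $H=\{1,(1\,3),(2\,4),(1\,3)(2\,4)\}$. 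Then $\mathcal{RP}_{\Theta,U,f}(X)=\prod_{i<j}(X-(\alpha_i+\alpha_j))=X^2(X^2-2)(X^2-18)$, there are four double cosets in $G\backslash U/H$ (corresponding to the pair-orbits $\{1,2\}$, $\{3,4\}$, $\{\{1,3\},\{2,4\}\}$, $\{\{1,4\},\{2,3\}\}$) but only three factors $h_i^{e_i}$, and the first two double cosets both give $h_\pi(X)=X$, which is not even an element of $\{X^2,\,X^2-2,\,X^2-18\}$. So no argument can establish the asserted bijection without an extra hypothesis: the relevant resultants \emph{are} annihilated here. The statement, and your proof, become correct precisely when $\mathcal{RP}_{\Theta,U,f}(X)$ is squarefree, for then distinct double cosets have disjoint root sets and injectivity follows immediately from your own orbit decomposition; this is also the only regime in which the paper applies the theorem (Theorem \ref{thDecom} assumes both resolvents squarefree, and the Remark in Section \ref{seInt} prescribes a Tschirnhausen transformation otherwise). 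You should either add that hypothesis or weaken the conclusion to: each $h_\pi$ is a power of an irreducible of the stated degree and $\prod_{G\pi H}h_\pi=\mathcal{RP}_{\Theta,U,f}$.
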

\begin{corollary} 
If $G\leq \pi H\pi^{-1}$ for some $\pi\in U$ then $\mathcal{RP}_{\Theta,U,f}(X)$ 
has a linear factor over $K$. 
Conversely, if $\mathcal{RP}_{\Theta,U,f}(X)$ has a non-repeated linear factor 
over $K$ then there exists $\pi\in U$ such that $G\leq \pi H\pi^{-1}$. 
\end{corollary}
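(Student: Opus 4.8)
The plan is to read off both implications directly from the degree formula in Theorem~\ref{thfun}, namely that the factor $h_\pi(X)$ attached to a double coset $G\pi H$ satisfies $\deg(h_\pi(X))=|G|/|G\cap\pi H\pi^{-1}|$ and is irreducible or a power of an irreducible in $K[X]$. Both directions are then short consequences of matching a degree-one factor with the index $|G|/|G\cap\pi H\pi^{-1}|$ equalling $1$.

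For the forward direction, I would suppose $G\leq\pi H\pi^{-1}$ for some $\pi\in U$. Then $G\cap\pi H\pi^{-1}=G$, so the degree formula gives $\deg(h_\pi(X))=|G|/|G|=1$. A degree-one polynomial cannot be a proper power of a positive-degree irreducible, so $h_\pi(X)$ is itself a linear irreducible factor lying in $K[X]$; this is the desired linear factor of $\mathcal{RP}_{\Theta,U,f}(X)$ over $K$.

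For the converse, I would start from a non-repeated linear factor, i.e. an index $i$ with $h_i(X)$ linear and $e_i=1$, so that the whole factor $h_i^{e_i}(X)=h_i(X)$ has degree $1$. Under the bijection of Theorem~\ref{thfun} this factor equals $h_\pi(X)$ for some double coset $G\pi H$, whence $|G|/|G\cap\pi H\pi^{-1}|=\deg(h_\pi(X))=1$. Since $G\cap\pi H\pi^{-1}$ is a subgroup of $G$, equality of orders forces $G\cap\pi H\pi^{-1}=G$, that is $G\leq\pi H\pi^{-1}$, as required.

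The point that needs care, and the reason the converse requires the factor to be \emph{non-repeated}, is the distinction between a linear $h_i(X)$ and the factor $h_i^{e_i}(X)$ it contributes to the product. If a linear $h_i(X)$ occurred with multiplicity $e_i>1$, then the corresponding $h_\pi(X)=h_i^{e_i}(X)$ would have degree $e_i>1$, forcing $|G\cap\pi H\pi^{-1}|=|G|/e_i<|G|$ and hence $G\not\leq\pi H\pi^{-1}$. Thus a merely linear, but possibly repeated, factor is not enough to conclude; one must invoke the bijection so that the degree-one factor genuinely corresponds to $|G|/|G\cap\pi H\pi^{-1}|$ being $1$. This is the only step where the hypotheses must be used with precision, and it is where I expect the argument's subtlety to lie.
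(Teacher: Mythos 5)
Your proof is correct and is exactly the argument the paper intends: the corollary is stated as an immediate consequence of Theorem~\ref{thfun}, read off from the degree formula $\deg(h_\pi(X))=|G|/|G\cap\pi H\pi^{-1}|$ together with the bijection between double cosets and the factors $h_i^{e_i}(X)$. Your closing remark correctly identifies why the non-repeated hypothesis is needed in the converse, namely that a repeated linear $h_i(X)$ contributes the factor $h_i^{e_i}(X)$ of degree $e_i>1$ to the bijection.
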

\begin{remark}
When $\mathcal{RP}_{\Theta,U,f}(X)$ is not squarefree, there exists 
a suitable Tschirnhausen transformation $\hat{f}$ of $f$ over $K$ such that 
$\mathcal{RP}_{\Theta,U,\hat{f}}(X)$ is squarefree (cf. \cite{Gir83}, 
\cite[Alg. 6.3.4]{Coh93}). 
\end{remark}

Now we apply Theorem \ref{thfun} to the cyclic sextic case. 
Let $f^1(X)$, $f^2(X)\in K[X]$ be separable sextic polynomials over $K$. 
We put 
\[
f(X):=f^1(X)f^2(X),\quad G_i:=\mathrm{Gal}(f^i/K),\quad L_i:=\mathrm{Spl}_K f^i(X),\quad 
(i=1,2).
\]

We assume that $G_1,G_2\leq C_6$ and apply Theorem \ref{thfun} to $m=12$, 
$f(X)=f^1(X)f^2(X)$, $U=\langle\sigma\rangle\times\langle\tau\rangle$, 
$H=\langle\sigma\tau\rangle$ or $\langle\sigma\tau^5\rangle$ where 
$\sigma,\tau\in S_{12}$ act on $R=K[x_1,\ldots,x_{12}]$ by 
\begin{align*}
\sigma \,:\, x_1\mapsto x_2\mapsto \cdots \mapsto x_6 \mapsto x_1,\quad 
\tau \,:\, x_7\mapsto x_8\mapsto \cdots \mapsto x_{12}\mapsto x_7.
\end{align*}
Let $\Theta_1$ (resp. $\Theta_2$) be a $U$-primitive 
$\langle\sigma\tau\rangle$-invariant (resp. $\langle\sigma\tau^5\rangle$-invariant) 
where $U=\langle\sigma\rangle\times\langle\tau\rangle$. 
Then the $U$-relative $\langle\sigma\tau\rangle$-invariant 
(resp. $\langle\sigma\tau^5\rangle$-invariant) resolvent 
polynomial of $f(X)=f^1(X)f^2(X)$ by $\Theta_1$ (resp. $\Theta_2$) is given by 
\begin{align*}
\mathcal{R}_f^i(X):=\mathcal{RP}_{\Theta_i,U,f}(X),\quad (i=1,2).
\end{align*}
This is also called (absolute) {\it multi-resolvent polynomial} 
(cf. \cite{RV99}, \cite{Ren04}). 

For a squarefree polynomial $\mathcal{R}(X)\in K[X]$ of degree $l$, we define 
the {\it decomposition type} $\mathrm{DT}(\mathcal{R})$ of $\mathcal{R}(X)$ 
by the partition of $l$ induced by the degrees of the irreducible factors of 
$\mathcal{R}(X)$ over $K$. 
By Theorem \ref{thfun}, we get the intersection field $L_1$ $\cap$ $L_2$ via the decomposition types 
$\mathrm{DT}(\mathcal{R}_f^1)$ and $\mathrm{DT}(\mathcal{R}_f^2)$. 
\begin{theorem}\label{thDecom}
For $f(X)=f^1(X)f^2(X)\in K[X]$ with $G_1$, $G_2\leq C_6$, we assume that 
$\#G_1\geq\#G_2$ and both $\mathcal{R}_f^1(X)$ and $\mathcal{R}_f^2(X)$ are 
squarefree. 
Then the Galois group $G=\mathrm{Gal}(f/K)$ and the intersection field 
$L_1 \cap L_2$ are given by 
the decomposition types $\mathrm{DT}(\mathcal{R}_f^1)$ and 
$\mathrm{DT}(\mathcal{R}_f^2)$ as on Table $1$. 
\end{theorem}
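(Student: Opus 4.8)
The plan is to read Table 1 directly off Theorem \ref{thfun} by computing, for each admissible $G=\mathrm{Gal}(f/K)$, the two double coset spaces $G\backslash U/\langle\sigma\tau\rangle$ and $G\backslash U/\langle\sigma\tau^5\rangle$. First I would record the underlying group theory: since $\mathrm{Spl}_K f(X)=L_1L_2$, the group $G$ is the fiber product of $G_1$ and $G_2$ over $\mathrm{Gal}((L_1\cap L_2)/K)$, hence a subdirect product of $G_1\times G_2\leq\langle\sigma\rangle\times\langle\tau\rangle=U$ whose two projections are $G_1$ and $G_2$. By Goursat's lemma such subgroups are classified by a common quotient $Q=G_1/N_1\cong G_2/N_2$ together with a gluing isomorphism, and Galois theory identifies $Q$ with $\mathrm{Gal}((L_1\cap L_2)/K)$. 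Since $G_1,G_2\leq C_6$ are cyclic, $Q$ is cyclic of order dividing $\gcd(\#G_1,\#G_2)$, so for each pair $(G_1,G_2)$ with $\#G_1\geq\#G_2$ there are only finitely many such $G$ to enumerate.

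The next, and technically cleanest, step exploits that $U$ is abelian. For any $\pi\in U$ and any subgroup $H\leq U$ one has $\pi H\pi^{-1}=H$, so $G\pi H=\pi\,GH$ is a single coset of $GH$; hence every double coset has the same size and, by Theorem \ref{thfun}, every irreducible factor of $\mathcal{R}_f^i(X)$ has the same degree $|G|/|G\cap H_i|$, where $H_1=\langle\sigma\tau\rangle$ and $H_2=\langle\sigma\tau^5\rangle$. Therefore $\mathrm{DT}(\mathcal{R}_f^i)$ is the balanced partition $d_i^{\,6/d_i}$ with $d_i=|G|/|G\cap H_i|$, and the decomposition type is equivalent to the single datum $|G\cap H_i|$. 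The proof then reduces to the elementary computation of the two intersection numbers $|G\cap H_1|$ and $|G\cap H_2|$ for each subdirect product $G$ produced by Goursat, which I would carry out case by case and tabulate against $(G_1,G_2)$ and $L_1\cap L_2$.

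The final step is to verify injectivity: that the pair $(\mathrm{DT}(\mathcal{R}_f^1),\mathrm{DT}(\mathcal{R}_f^2))$ determines $G$ and $L_1\cap L_2$ within each $(G_1,G_2)$ block. This is where the main obstacle lies, and it is precisely the reason two invariants are needed rather than one. When $G_1=G_2$ and $Q$ is as large as possible, Goursat produces several ``diagonal'' gluings that a single resolvent cannot separate; concretely, the straight diagonal $\langle(\sigma,\tau)\rangle$ coincides with $H_1$ and its twist $\langle(\sigma,\tau^5)\rangle$ coincides with $H_2$, so by the Corollary each acquires a linear factor in exactly one of the two resolvents. I would check that the invariants $H_1,H_2$ break this symmetry for all relevant $G$, i.e. that no two subdirect products sharing the same $(G_1,G_2)$ agree on both $|G\cap H_1|$ and $|G\cap H_2|$. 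I would also note that relabelling the roots of $f^2$ to invert the cyclic order conjugates $\tau$ to $\tau^5$ and hence interchanges $\mathcal{R}_f^1$ and $\mathcal{R}_f^2$; consequently the genuine invariant is the \emph{unordered} pair of decomposition types, and Table 1 is to be read with this symmetry in mind. Once injectivity is confirmed, reading the table backwards recovers $G$ and $L_1\cap L_2$ from the computed decomposition types.
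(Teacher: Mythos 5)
Your proposal is correct and rests on the same foundation as the paper's justification, namely Theorem \ref{thfun} applied to the double coset spaces $G\backslash U/\langle\sigma\tau\rangle$ and $G\backslash U/\langle\sigma\tau^5\rangle$; the difference lies in how those double cosets are evaluated. The paper simply reports that Table 1 was verified by computing double coset representatives and sizes in GAP, whereas you replace the machine computation by two structural observations: (a) Goursat's lemma enumerates the admissible $G$ as subdirect products $G_1\times_Q G_2$ with $Q\cong\mathrm{Gal}((L_1\cap L_2)/K)$ cyclic of order dividing $\gcd(\#G_1,\#G_2)$, and (b) since $U$ is abelian every double coset $G\pi H_i$ is a single coset of $GH_i$, so each $\mathrm{DT}(\mathcal{R}_f^i)$ is forced to be the balanced partition with part size $d_i=|G|/|G\cap H_i|$, reducing the whole table to the two integers $|G\cap H_1|$ and $|G\cap H_2|$ per subgroup. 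This is a genuine gain: it explains a posteriori why every row of Table 1 consists of equal parts, it makes the verification hand-checkable (for instance, for $G_1=G_2=C_6$ the six Goursat subgroups yield the six distinct ordered pairs $(6;6)$, $(3,3;3,3)$, $(6;2,2,2)$, $(2,2,2;6)$, $(3,3;1,\dots,1)$, $(1,\dots,1;3,3)$, exactly matching the table), and your closing remark that the two diagonal gluings are exchanged by reversing the cyclic order of the roots of $f^2$ correctly accounts for the paired rows. The only points you should make explicit in a full write-up are that the roots must first be ordered so that $G\leq U$ holds, and that squarefreeness of $\mathcal{R}_f^i$ is what upgrades the factors $h_\pi$ of Theorem \ref{thfun} from prime powers to irreducibles, so that the degrees $|G|/|G\cap\pi H_i\pi^{-1}|$ really are the entries of $\mathrm{DT}(\mathcal{R}_f^i)$.
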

\begin{center}
{\rm Table} $1$\vspace*{5mm}\\
\begin{tabular}{|c|c|c|l|l|l|}\hline
$G_1$& $G_2$ & $G$ & & ${\rm DT}(\mathcal{R}_f^1)$ 
& ${\rm DT}(\mathcal{R}_f^2)$ \\ \hline 
%%%%%%%%%%%%%%%%%%%%%%%%%%%%%%%%%%%%%%%%%%%%%%%%%%%%%%%%%%%%%%%%%%%%%%%%%
& & $C_6\times C_6$ & $L_1\cap L_2=K$ & $6$ & $6$\\\cline{3-6} 
& & $C_6\times C_3$ & $[L_1\cap L_2:K]=2$ & $3,3$ & $3,3$\\ \cline{3-6} 
& \raisebox{-1.6ex}[0cm][0cm]{$C_6$} & \raisebox{-1.6ex}[0cm][0cm]{$C_6\times C_2$} 
& \raisebox{-1.6ex}[0cm][0cm]{$[L_1\cap L_2:K]=3$} & $6$ & $2,2,2$\\ \cline{5-6}
& & & & $2,2,2$ & $6$\\ \cline{3-6} 
& & \raisebox{-1.6ex}[0cm][0cm]{$C_6$} & \raisebox{-1.6ex}[0cm][0cm]{$L_1=L_2$} 
& $3,3$ & $1,1,1,1,1,1$\\ \cline{5-6}
$C_6$ & & & & $1,1,1,1,1,1$ & $3,3$\\ \cline{2-6} 
& & $C_6\times C_3$ & $L_1\cap L_2=K$ & $6$ & $6$\\ \cline{3-6} 
& $C_3$ & \raisebox{-1.6ex}[0cm][0cm]{$C_6$} 
& \raisebox{-1.6ex}[0cm][0cm]{$L_1\supset L_2$} & $6$ & $2,2,2$\\ \cline{5-6}
& & & & $2,2,2$ & $6$\\ \cline{2-6} 
& \raisebox{-1.6ex}[0cm][0cm]{$C_2$} & $C_6\times C_2$ & $L_1\cap L_2=K$ 
& $6$ & $6$\\ \cline{3-6}
& & $C_6$ & $L_1\supset L_2$ & $3,3$ & $3,3$\\ \cline{2-6} 
& $\{1\}$  & $C_6$ & $L_1\supset L_2=K$ & $6$ & $6$\\ \cline{1-6}
%%%%%%%%%%%%%%%%%%%%%%%%%%%%%%%%%%%%%%%%%%%%%%%%%%%%%%%%%%%%%%%%%%%%%%%%%
& & $C_3\times C_3$ & $L_1\cap L_2=K$ & $3,3$ & $3,3$\\\cline{3-6} 
& $C_3$ & \raisebox{-1.6ex}[0cm][0cm]{$C_3$} 
& \raisebox{-1.6ex}[0cm][0cm]{$L_1=L_2$} & $3,3$ & $1,1,1,1,1,1$\\ \cline{5-6}
$C_3$ & & & & $1,1,1,1,1,1$ & $3,3$\\ \cline{2-6} 
& $C_2$ & $C_6$ & $L_1\cap L_2=K$ & $6$ & $6$\\ \cline{2-6} 
& $\{1\}$  & $C_3$ & $L_1\supset L_2=K$ & $3,3$ & $3,3$\\ \cline{1-6}
%%%%%%%%%%%%%%%%%%%%%%%%%%%%%%%%%%%%%%%%%%%%%%%%%%%%%%%%%%%%%%%%%%%%%%%%%
& \raisebox{-1.6ex}[0cm][0cm]{$C_2$}  & $C_2\times C_2$ & $L_1\cap L_2=K$ 
& $2,2,2$ & $2,2,2$\\ \cline{3-6}
$C_2$ & & $C_2$ & $L_1=L_2$ & $1,1,1,1,1,1$ & $1,1,1,1,1,1$\\ \cline{2-6}
& \{1\} & $C_2$ & $L_1\supset L_2$ & $2,2,2$ & $2,2,2$\\ \cline{1-6}
%%%%%%%%%%%%%%%%%%%%%%%%%%%%%%%%%%%%%%%%%%%%%%%%%%%%%%%%%%%%%%%%%%%%%%%%%
$\{1\}$ & $\{1\}$ & $\{1\}$ & $L_1=L_2=K$ & $1,1,1,1,1,1$ & $1,1,1,1,1,1$\\ \cline{1-6}
%%%%%%%%%%%%%%%%%%%%%%%%%%%%%%%%%%%%%%%%%%%%%%%%%%%%%%%%%%%%%%%%%%%%%%%%%
\end{tabular}
\vspace*{5mm}
\end{center}

We checked the decomposition types $\mathrm{DT}(\mathcal{R}_f^i)$, $(i=1,2)$, 
on Table $1$ using the computer algebra system GAP \cite{GAP} via the command 
{\tt DoubleCosetRepsAndSizes}. 

%%%%%%%%%%%%%%%%%%%%%%%%%%%%%%%%%%%%%%%%%%%%%%%%%%%%%%%%%%%%%%%%%%%%%%%%%%%%%%%%%%%%%%%%%%%%
%
\section{An explicit answer to the isomorphism problem}\label{seIso}
%
%%%%%%%%%%%%%%%%%%%%%%%%%%%%%%%%%%%%%%%%%%%%%%%%%%%%%%%%%%%%%%%%%%%%%%%%%%%%%%%%%%%%%%%%%%%%

By using Theorem \ref{thDecom}, 
we give an answer to the field intersection problem of 
\[
f^{C_6}_s(X)=X^6-2sX^5-5(s+3)X^4-20X^3+5sX^2+2(s+3)X+1,
\]
i.e. for $a,b\in K$ how to determine the intersection of $\mathrm{Spl}_K f_a(X)$ 
and $\mathrm{Spl}_K f_b(X)$, via multi-resolvent polynomials. 
An explicit answer to the field isomorphism problem of $f^{C_6}_s(X)$ will be 
given as the special case of the field intersection problem (see Theorem \ref{thC6}). 

For $n\geq 3$, Rikuna \cite{Rik02} constructed one-parameter families of cyclic 
polynomials $f^{R(n)}_s(X)$ of degree $n$ over $K$ with char $K {\not|}$ $n$ and 
$K\ni \zeta_n+\zeta_n^{-1}$ where $\zeta_n$ is a primitive $n$-th root of unity. 
The simplest sextic polynomial $f_s^{C_6}(X)$ may be obtained as the sextic case 
$f^{R(6)}_s(X)$ (see also \cite{Miy99}, \cite{HM99}). 

Komatsu \cite{Kom04} established descent Kummer theory via $f^{R(n)}_s(X)$, and 
gave a necessary and sufficient condition to 
$\mathrm{Spl}_K f^{R(n)}_a(X)\subset \mathrm{Spl}_K f^{R(n)}_b(X)$ for $a,b\in K$ 
(see also \cite{Oga03}, \cite{Kid05}). 
It is interesting to compare the results of \cite{Oga03}, \cite{Kom04}, 
\cite{Kid05} with results given in this section. 
We note that a method via multi-resolvent polynomials is valid also for 
non-abelian groups (see \cite{HM07}, \cite{HM09b}, \cite{HM09c}, \cite{HM10c}). 

Let $K$ be a field of char $K\neq 2,3$, 
$K(z)$ the rational function field over $K$ with variable $z$ and 
$\sigma$ a $K$-automorphism of $K(z)$ of order $6$ which is given by (\ref{actz}). 
We also take another rational function field $K(w)$ over $K$, a $K$-automorphism 
of $K(w)$ of order $6$ 
\begin{align*}
\tau : w\ \mapsto\ \frac{w-1}{w+2}\ \mapsto\ -\frac{1}{w+1}\ \mapsto\ 
-\frac{w+2}{2w+1}\ \mapsto\ -\frac{w+1}{w}\ \mapsto\ -\frac{2w+1}{w-1}\ \mapsto\ w
\end{align*}
and $f^{C_6}_t(X)=X^6-2tX^5-5(t+3)X^4-20X^3+5tX^2+2(t+3)X+1$ where 
\begin{align*}
t=\frac{w^6-15w^4-20w^3+6w+1}{w(2w^4+5w^3-5w-2)}
=\frac{(w^3+3w^2-1)(w^3-3w^2-6w+1)}{w(w+1)(w-1)(w+2)(2w+1)}
\end{align*}
by the same manner of $K(z)$, $\sigma$ and $f_s^{C_6}(X)$. 

Then the field $K(z,w)$ is $(C_6\times C_6)$-extension of $K(z,w)^U=K(s,t)$ 
where $U=\langle\sigma\rangle\times\langle\tau\rangle$. 
Now we should find suitable $U$-primitive $\langle\sigma\tau\rangle$-invariant 
$\Theta_1$ (resp. $\langle\sigma\tau^5\rangle$-invariant $\Theta_2$). 
By \cite[Theorem 1.4]{AHK98}, there exists $\langle\sigma\tau\rangle$-invariant 
$\Theta_1$ such that $K(z,w)=K(z,\Theta_1)$. 
Moreover we may obtain the following $\Theta_1$ and $\Theta_2$: 
\begin{lemma}\label{lemTheta}
Let $U=\langle\sigma\rangle\times\langle\tau\rangle$, 
\[
\Theta_1=-\frac{zw+z+1}{z-w}\quad\text{and}\quad 
\Theta_2=\frac{zw-1}{z+w+1}.
\]
Then the following assertions hold$\,:$\\
{\rm (i)} the element $\Theta_1$ is a $U$-primitive 
$\langle\sigma\tau\rangle$-invariant$\,;$\\
{\rm (ii)} the element $\Theta_2$ is a $U$-primitive 
$\langle\sigma\tau^5\rangle$-invariant$\,;$\\
{\rm (iii)} the $U$-orbit of $\Theta_i$ is given by 
the same as $\langle\sigma\rangle$-orbit of $z\,;$ 
\begin{align*}
\mathrm{Orb}_U(\Theta_i)=
\Big\{\Theta_i, \frac{\Theta_i-1}{\Theta_i+2},-\frac{1}{\Theta_i+1},
-\frac{\Theta_i+2}{2\Theta_i+1},-\frac{\Theta_i+1}{\Theta_i},
-\frac{2\Theta_i+1}{\Theta_i-1}\Big\},\quad (i=1,2). 
\end{align*}
\end{lemma}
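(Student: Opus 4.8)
The plan is to reduce all three assertions to one structural fact: both generators $\sigma$ and $\tau$ of $U$ act on $\Theta_i$ \emph{through the single order-six Möbius transformation} $m(X)=(X-1)/(X+2)$ that defines $\sigma$ on $z$ in (\ref{actz}). First I would verify by direct substitution that $\sigma(\Theta_1)=(\Theta_1-1)/(\Theta_1+2)$. Replacing $z$ by $(z-1)/(z+2)$ in $\Theta_1=-(zw+z+1)/(z-w)$ and clearing the common factor $(z+2)$ from numerator and denominator turns $\sigma(\Theta_1)$ into a ratio of two polynomials linear in $w$; carrying out the same clearing on $(\Theta_1-1)/(\Theta_1+2)$ produces the identical ratio $(zw+2z-w+1)/(zw-z+2w+1)$, so the two agree. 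The completely analogous substitution gives $\sigma(\Theta_2)=(\Theta_2-1)/(\Theta_2+2)$.

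Next I would establish the two invariances. For $\Theta_1$ the cleanest route is to apply $\sigma\tau$ directly, i.e. $z\mapsto(z-1)/(z+2)$ and $w\mapsto(w-1)/(w+2)$ simultaneously: clearing the factor $(z+2)(w+2)$ from numerator and denominator collapses $\sigma\tau(\Theta_1)$ to $3(zw+z+1)$ over $3(z-w)$, so $\sigma\tau(\Theta_1)=\Theta_1$. For $\Theta_2$ I would instead use that $\Theta_2=(zw-1)/(z+w+1)$ is symmetric in $z$ and $w$, while $\tau$ acts on $w$ by exactly the Möbius map $m$ that $\sigma$ uses on $z$; the symmetry then forces $\tau(\Theta_2)=(\Theta_2-1)/(\Theta_2+2)=\sigma(\Theta_2)$, whence $\sigma\tau^{-1}=\sigma\tau^5$ fixes $\Theta_2$. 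This shows $\langle\sigma\tau\rangle\subseteq\mathrm{Stab}_U(\Theta_1)$ and $\langle\sigma\tau^5\rangle\subseteq\mathrm{Stab}_U(\Theta_2)$.

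Finally I would pin down the stabilizers by orbit–stabilizer. From $\sigma(\Theta_1)=m(\Theta_1)$ and $\sigma\tau(\Theta_1)=\Theta_1$ I get $\tau(\Theta_1)=\sigma^{-1}(\Theta_1)=m^{5}(\Theta_1)$, so both generators of $U$ move $\Theta_1$ only through powers of $m$; likewise $\sigma=\tau=m$ on $\Theta_2$. Hence $\mathrm{Orb}_U(\Theta_i)\subseteq\{m^k(\Theta_i):0\le k\le 5\}$, and this latter set is precisely the six elements displayed in (iii). The only point requiring a genuine argument, and the one I would flag as the main (if modest) obstacle, is that these six elements are \emph{distinct}: since $m$ has order six, each $m^k$ with $0<k<6$ is a nontrivial $K$-rational Möbius transformation whose fixed locus is the zero set of a nonzero polynomial of degree at most $2$ over $K$ and therefore lies in $\overline{K}$; as $\Theta_i$ is transcendental over $K$, it satisfies no such relation, so $m^k(\Theta_i)\neq\Theta_i$. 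Thus $|\mathrm{Orb}_U(\Theta_i)|=6$, which proves (iii), and orbit–stabilizer gives $|\mathrm{Stab}_U(\Theta_i)|=36/6=6$. Since the stabilizer already contains the order-six subgroup $\langle\sigma\tau\rangle$ (resp. $\langle\sigma\tau^5\rangle$), equality follows, establishing (i) and (ii). All the real work sits in the substitution algebra of the first two paragraphs; everything after is formal.
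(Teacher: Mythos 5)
Your proof is correct, and it coincides with the paper's approach: the paper's entire proof of this lemma is the single line ``We can check the assertions by direct computations,'' and your argument is exactly those computations carried out, organized by the (correct) observation that $\sigma$ and $\tau$ both act on $\Theta_i$ through the order-six M\"obius map $m(X)=(X-1)/(X+2)$, with orbit--stabilizer and the transcendence of $\Theta_i$ over $K$ supplying primitivity. I verified the key substitution identities (e.g.\ $\sigma(\Theta_1)=(zw+2z-w+1)/(zw-z+2w+1)=(\Theta_1-1)/(\Theta_1+2)$ and the factor of $3$ cancelling in $\sigma\tau(\Theta_1)$) and they are as you state.
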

\begin{proof}
We can check the assertions by direct computations. 
\end{proof}

Put $f_{a,b}(X):=f^{C_6}_a(X)f^{C_6}_b(X)$. 
The multi-resolvent polynomials 
\begin{align*}
\mathcal{R}_{f_{a,b}}^i(X):=
\mathcal{RP}_{\Theta_i,\langle\sigma\rangle\times\langle\tau\rangle,f_{a,b}}(X),
\quad (i=1,2)
\end{align*}
with respect to $\Theta_1$ and $\Theta_2$ as in Lemma \ref{lemTheta} are given by 
\begin{align}
\mathcal{R}_{f_{a,b}}^i(X)=f_{A_i}^{C_6}(X),\quad (i=1,2)\label{polyR}
\end{align}
where
\begin{align*}
A_1=-\frac{ab+3a+9}{a-b},\quad A_2=\frac{ab-9}{a+b+3}.
\end{align*}
Note that 
\begin{align*}
\mathrm{disc}(\mathcal{R}_{f_{a,b}}^1(X))&=
\frac{6^6(a^2+3a+9)^5(b^2+3b+9)^5}{(a-b)^{10}},\\
\mathrm{disc}(\mathcal{R}_{f_{a,b}}^2(X))&=
\frac{6^6(a^2+3a+9)^5(b^2+3b+9)^5}{(a+b+3)^{10}}.
\end{align*}

By Theorem \ref{thDecom}, we get the intersection field 
$\mathrm{Spl}_K f^{C_6}_a(X)$ $\cap$ $\mathrm{Spl}_K f^{C_6}_b(X)$ via Table $1$. 

\begin{theorem}\label{thDab}
Let $K$ be a field of char $K\neq 2,3$ and $\mathcal{R}_{f_{a,b}}^i(X)=f_{A_i}^{C_6}(X)$, 
$(i=1,2)$, as in $(\ref{polyR})$. 
For $a,b\in K$ with $(a-b)(a+b+3)\neq 0$ and $(a^2+3a+9)(b^2+3b+9)\neq 0$, 
we assume that $\#\mathrm{Gal}_K f^{C_6}_a(X)\geq\#\mathrm{Gal}_K f^{C_6}_b(X)$. 
Then the intersection field 
$\mathrm{Spl}_K f^{C_6}_a(X)\cap\mathrm{Spl}_K f^{C_6}_b(X)$ is given by the 
decomposition types $\mathrm{DT}(\mathcal{R}_{f_{a,b}}^1)$ and 
$\mathrm{DT}(\mathcal{R}_{f_{a,b}}^2)$ as on Table $1$.
\end{theorem}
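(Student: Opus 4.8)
The plan is to obtain Theorem \ref{thDab} as a direct specialization of the general decomposition result Theorem \ref{thDecom}. To invoke that theorem I must verify its three standing hypotheses for the pair $f^1(X)=f^{C_6}_a(X)$, $f^2(X)=f^{C_6}_b(X)$: namely that $G_1=\mathrm{Gal}_K f^{C_6}_a(X)$ and $G_2=\mathrm{Gal}_K f^{C_6}_b(X)$ are subgroups of $C_6$, that $\#G_1\geq\#G_2$, and that both multi-resolvents $\mathcal{R}_{f_{a,b}}^1(X)$ and $\mathcal{R}_{f_{a,b}}^2(X)$ are squarefree. The second is assumed outright in the statement, so the work concentrates on the first and the third.

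For the subgroup condition I would argue by specialization. Over the rational function field $K(s)$ the polynomial $f^{C_6}_s(X)$ has Galois group $C_6$, as recorded in Section \ref{sesimsextic}. The discriminant of $f^{C_6}_s(X)$ is $6^6(s^2+3s+9)^5$; since $\mathrm{char}\,K\neq 2,3$ we have $6\neq 0$ in $K$, and the hypothesis $(a^2+3a+9)(b^2+3b+9)\neq 0$ forces the specialized discriminants $6^6(a^2+3a+9)^5$ and $6^6(b^2+3b+9)^5$ to be nonzero. Hence both $f^{C_6}_a(X)$ and $f^{C_6}_b(X)$ are separable, and the standard specialization principle for Galois groups shows that each $G_i$ embeds, up to conjugacy in $S_6$, as a subgroup of the generic group $C_6$. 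This establishes $G_1,G_2\leq C_6$.

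For squarefreeness I would invoke the identity $(\ref{polyR})$, which expresses each multi-resolvent as $\mathcal{R}_{f_{a,b}}^i(X)=f_{A_i}^{C_6}(X)$ together with the displayed discriminant formulas. The nondegeneracy hypotheses $(a-b)(a+b+3)\neq 0$ and $(a^2+3a+9)(b^2+3b+9)\neq 0$ make both $\mathrm{disc}(\mathcal{R}_{f_{a,b}}^1(X))$ and $\mathrm{disc}(\mathcal{R}_{f_{a,b}}^2(X))$ nonzero in $K$, so neither resolvent has a repeated root; both are squarefree. With all three hypotheses of Theorem \ref{thDecom} in hand, that theorem identifies $G=\mathrm{Gal}(f^1 f^2/K)$ and the intersection $\mathrm{Spl}_K f^{C_6}_a(X)\cap\mathrm{Spl}_K f^{C_6}_b(X)$ from the pair of decomposition types $\mathrm{DT}(\mathcal{R}_{f_{a,b}}^1)$, $\mathrm{DT}(\mathcal{R}_{f_{a,b}}^2)$ according to Table $1$, which is exactly the assertion.

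The one point that merits care, and which I regard as the main obstacle, is the legitimacy of the generic-to-specialized transfer of the resolvent identity $(\ref{polyR})$. The formula $\mathcal{R}_{f_{a,b}}^i(X)=f_{A_i}^{C_6}(X)$ rests on the invariants $\Theta_1,\Theta_2$ of Lemma \ref{lemTheta}, and I must ensure it persists after the substitution to concrete $a,b\in K$. This is safe because the resolvent is built from $\omega_f$ as a universal polynomial in the coefficients of $f^{C_6}_a f^{C_6}_b$, so forming the resolvent commutes with parameter specialization whenever the specialized product stays separable, precisely the separability secured above; moreover $(a-b)(a+b+3)\neq 0$ keeps the roots of $f^{C_6}_a$ and $f^{C_6}_b$ disjoint and keeps $A_1,A_2$ well-defined. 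Granting this, the double-coset bijection of Theorem \ref{thfun} underlying Table $1$ transfers verbatim, and no case outside the table can occur since $G_1,G_2\leq C_6$.
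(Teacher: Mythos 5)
Your proposal is correct and follows essentially the same route as the paper, which obtains Theorem \ref{thDab} directly by applying Theorem \ref{thDecom} to the explicit multi-resolvents $\mathcal{R}_{f_{a,b}}^i(X)=f_{A_i}^{C_6}(X)$ of $(\ref{polyR})$, with the displayed discriminant formulas guaranteeing squarefreeness under the hypotheses $(a-b)(a+b+3)\neq 0$ and $(a^2+3a+9)(b^2+3b+9)\neq 0$. Your additional care about the specialization argument for $G_1,G_2\leq C_6$ and the persistence of $(\ref{polyR})$ under specialization merely makes explicit what the paper leaves implicit.
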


As the special case of Theorem \ref{thDab}, we obtain an explicit answer to 
the field isomorphism problem of $f_s^{C_6}(X)$. 
\begin{theorem}\label{thC6}
Let $K$ be a field of char $K\neq 2,3$. 
For $a,b\in K$ with $(a-b)(a+b+3)\neq 0$ and $(a^2+3a+9)(b^2+3b+9)\neq 0$, 
the following three conditions are equivalent$\,:$\\
{\rm (i)} the splitting fields of $f^{C_6}_a(X)$ and of $f^{C_6}_b(X)$ 
over $K$ coincide$\,;$\\
{\rm (ii)} the polynomial $f^{C_6}_{A_i}(X)$ splits completely into 
$6$ linear factors over $K$ for $i=1$ or $i=2$ where
\[
A_1=-\frac{ab+3a+9}{a-b}\ \ \mathit{and}\ \ A_2=\frac{ab-9}{a+b+3}\,;
\]
{\rm (iii)} there exists $z\in K$ such that 
\[
B\,=\,a+\frac{(a^2+3a+9)z(z+1)(z-1)(z+2)(2z+1)}{f_a(z)}
\]
where $B=b$ or $B=-b-3$. 

Moreover if $\mathrm{Gal}_K f^{C_6}_a(X)\cong C_6$ or $C_3$ $($resp. 
$\mathrm{Gal}_K f^{C_6}_a(X)\cong C_2$ or $\{1\})$ then {\rm (ii)} occurs for 
only one of $A_1$ and $A_2$ $($resp. for both of $A_1$ and $A_2)$ and {\rm (iii)} 
occurs for only one of $b$ and $-b-3$ $($resp. for both of $b$ and $-b-3)$. 
\end{theorem}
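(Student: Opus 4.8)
The plan is to establish the chain $(\mathrm{i})\Rightarrow(\mathrm{ii})\Rightarrow(\mathrm{iii})\Rightarrow(\mathrm{i})$ by feeding the multi-resolvent identities (\ref{polyR}) into the classification of Theorem \ref{thDab} (Table $1$) and then translating the decomposition types into the explicit parametrization of $f^{C_6}_s(X)$ recorded in Section \ref{sesimsextic}. The equivalence $(\mathrm{i})\Leftrightarrow(\mathrm{ii})$ is essentially bookkeeping. Under the standing hypotheses $(a-b)(a+b+3)\neq 0$ and $(a^2+3a+9)(b^2+3b+9)\neq 0$, the discriminant formulas displayed after (\ref{polyR}) are nonzero, so both $\mathcal{R}^1_{f_{a,b}}(X)=f^{C_6}_{A_1}(X)$ and $\mathcal{R}^2_{f_{a,b}}(X)=f^{C_6}_{A_2}(X)$ are squarefree and Theorem \ref{thDab} applies. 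First I would scan Table $1$ and observe that the rows with $L_1=L_2$ (i.e. $\mathrm{Spl}_K f^{C_6}_a(X)=\mathrm{Spl}_K f^{C_6}_b(X)$) are \emph{exactly} the rows in which at least one of $\mathrm{DT}(\mathcal{R}^1_{f_{a,b}})$, $\mathrm{DT}(\mathcal{R}^2_{f_{a,b}})$ equals $1,1,1,1,1,1$; no other row carries that decomposition type. Since $\mathrm{DT}=1,1,1,1,1,1$ means precisely that $f^{C_6}_{A_i}(X)$ splits into six linear factors over $K$, this gives $(\mathrm{i})\Leftrightarrow(\mathrm{ii})$ in both directions.

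For $(\mathrm{ii})\Leftrightarrow(\mathrm{iii})$ the key auxiliary fact is that for $c\in K$ the polynomial $f^{C_6}_c(X)$ splits into six linear factors over $K$ if and only if $c=s(z)$ for some $z\in K$, where $s(z)=N(z)/D(z)$ with $N(z)=z^6-15z^4-20z^3+6z+1$ and $D(z)=z(z+1)(z-1)(z+2)(2z+1)$ is the generating map of Section \ref{sesimsextic}. I would prove this from the two identities $f^{C_6}_c(z)=N(z)-c\,D(z)$ (obtained by collecting the coefficients of $c$ in $f^{C_6}_c$) and $f^{C_6}_s(X)=\prod_{x\in\mathrm{Orb}_{\langle\sigma\rangle}(z)}(X-x)$. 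If $z\in K$ is a root of $f^{C_6}_c$ then $N(z)=c\,D(z)$; since $N$ and $D$ are coprime (one checks $N$ is nonzero at each of $z=0,\pm 1,-2,-1/2$), this forces $D(z)\neq 0$, whence $c=N(z)/D(z)=s(z)$. Conversely, if $c=s(z)$ with $z\in K$ then the whole $\langle\sigma\rangle$-orbit of $z$ lies in $K$ and already exhausts the six roots, so $f^{C_6}_c$ splits completely.

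Granting this, $(\mathrm{ii})\Leftrightarrow(\mathrm{iii})$ reduces to a direct algebraic computation using $s(z)-a=f^{C_6}_a(z)/D(z)$. Writing $A_1=-(ab+3a+9)/(a-b)$ and solving $A_1=s(z)$ for $b$, one finds $b-a=(a^2+3a+9)/(s(z)-a)=(a^2+3a+9)D(z)/f^{C_6}_a(z)$, which is exactly the displayed formula of $(\mathrm{iii})$ with $B=b$. The identical manipulation applied to $A_2=(ab-9)/(a+b+3)$ yields the same formula with $B=-b-3$; the appearance of $-b-3$ is precisely the symmetry $L^{(6)}_b=L^{(6)}_{-b-3}$ noted in Section \ref{sesimsextic}. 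Here the hypothesis $a^2+3a+9\neq 0$ guarantees $A_i\neq a$, equivalently $f^{C_6}_a(z)\neq 0$, so no denominator vanishes. Thus $(\mathrm{ii})$ for $A_1$ (resp. $A_2$) corresponds termwise to $(\mathrm{iii})$ for $B=b$ (resp. $B=-b-3$), giving the equivalence.

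Finally the ``moreover'' assertion is read off from the same rows of Table $1$: when $\mathrm{Gal}_K f^{C_6}_a(X)\cong C_6$ or $C_3$ the $L_1=L_2$ rows pair $3,3$ with $1,1,1,1,1,1$, so exactly one of $f^{C_6}_{A_1}$, $f^{C_6}_{A_2}$ splits completely; when $\mathrm{Gal}_K f^{C_6}_a(X)\cong C_2$ or $\{1\}$ the relevant rows carry $1,1,1,1,1,1$ for both resolvents, so both split. The termwise dictionary of the previous paragraph then transfers this dichotomy to the two branches $b$ and $-b-3$ of $(\mathrm{iii})$. I expect the main obstacle to be the auxiliary splitting criterion, and specifically verifying that every $K$-rational root $z$ of $f^{C_6}_c$ satisfies $D(z)\neq 0$ (so that $s(z)$ is well defined) and that the generic factorization specializes without degeneracy under the stated hypotheses; once this is secured, the remaining steps are the routine algebra above and a direct inspection of Table $1$.
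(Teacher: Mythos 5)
Your proposal is correct and follows essentially the same route as the paper: the equivalence (i)$\Leftrightarrow$(ii) is read off from Table $1$ via Theorem \ref{thDab} (using the nonvanishing of the displayed discriminants to ensure squarefreeness), and (ii)$\Leftrightarrow$(iii) is the dictionary between rational roots of $f_{A_i}^{C_6}(X)$ and the parametrization $c=s(z)$, which is exactly what the paper's Remark after Theorem \ref{thC6} asserts. Your write-up in fact supplies more detail than the paper (the coprimality of $N$ and $D$, the computation $B-a=(a^2+3a+9)/(A_i-a)$, and the pairing of $A_1$ with $B=b$ and $A_2$ with $B=-b-3$), all of which checks out.
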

\begin{remark}
The condition (iii) is just a restatement of (ii). 
Indeed, for $i=1,2$, rational roots $z\in K$ of $f_{A_i}^{C_6}(X)$ satisfy 
the condition (iii) for $B=b$ and $B=-b-3$ respectively. 
The equivalence of the conditions (i) and (iii) is valid also for $a=b$ and $b=-a-3$. 
\end{remark}

Theorem \ref{thC6} is a generalization of the results of the simplest cubic 
(resp. quartic) case in \cite{Mor94}, \cite{Cha96}, \cite{HM09a} (resp. \cite{H2}). 
This is an analogue of Kummer theory; for a field $K$ which contains 
a primitive $6$th root $\zeta_6$ of unity and $a,b\in K$, 
$\mathrm{Spl}_K(X^6-a)=\mathrm{Spl}_K(X^6-b)$ if and only if 
$X^6-ab$ or $X^6-ab^5$ splits completely over $K$. 
It is remarkable that Theorem \ref{thC6} does not need the assumption that 
$K$ contains $\zeta_6$. 

By Theorem \ref{thC6}, for a fixed $a\in K$ with $a^2+3a+9\neq 0$, we have 
$\mathrm{Spl}_K f^{C_6}_b(X)=\mathrm{Spl}_K f^{C_6}_a(X)$ where $b$ is given 
as in Theorem \ref{thC6} (iii) for arbitrary $z\in K$ with $f_a(z)\neq 0$ 
and $b^2+3b+9\neq 0$. 
\begin{corollary}\label{cor1}
Let $K$ be an infinite field of char $K\neq 2$. 
For a fixed $a\in K$ with $a^2+3a+9\neq 0$, there exist infinitely many 
$b\in K$ such that $\mathrm{Spl}_{K} f^{C_6}_b(X)=\mathrm{Spl}_K f^{C_6}_a(X)$. 
\end{corollary}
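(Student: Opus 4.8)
The plan is to turn the corollary into a statement about a single rational function. The discussion immediately preceding the corollary (a direct application of Theorem \ref{thC6}, (iii)$\Rightarrow$(i)) tells us that for every $z\in K$ with $f^{C_6}_a(z)\neq 0$ the value
\[
b(z)=a+\frac{(a^2+3a+9)\,z(z+1)(z-1)(z+2)(2z+1)}{f^{C_6}_a(z)}
\]
satisfies $\mathrm{Spl}_K f^{C_6}_{b(z)}(X)=\mathrm{Spl}_K f^{C_6}_a(X)$, as long as in addition $b(z)^2+3b(z)+9\neq 0$; by the remark after Theorem \ref{thC6} the degenerate cases $b(z)=a$ and $b(z)=-a-3$ cause no trouble. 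Granting this, it suffices to prove that $z\mapsto b(z)$ takes infinitely many values as $z$ runs over $K$, i.e. that $b(z)\in K(z)$ is non-constant.

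First I would simplify $b(z)$. Collecting the coefficients of $a$ in $f^{C_6}_a(z)$ gives the one-line identity $f^{C_6}_a(z)=N(z)-a\,D(z)$, where $N(z)=z^6-15z^4-20z^3+6z+1$ and $D(z)=z(z+1)(z-1)(z+2)(2z+1)$ are exactly the numerator and denominator of the generating function $s=s(z)$ of Section \ref{sesimsextic}. Since the numerator of $b(z)-a$ is precisely $(a^2+3a+9)D(z)$, substituting $N(z)=s(z)D(z)$ collapses the formula to
\[
b(z)=a+\frac{a^2+3a+9}{s(z)-a},
\]
valid wherever $D(z)\neq 0$ and $s(z)\neq a$. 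Now $s(z)=N(z)/D(z)$ is non-constant (one cannot have $N,D$ proportional since $\deg N=6\neq 5=\deg D$), while the outer map $\mu:s\mapsto a+(a^2+3a+9)/(s-a)$ is a M\"obius transformation of determinant $-(a^2+3a+9)\neq 0$, hence a bijection of $\mathbb{P}^1$. Composing the non-constant $s(z)$ with the injective $\mu$ yields a non-constant $b(z)$.

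It then remains to invoke the standard fact that over the infinite field $K$ a non-constant rational function attains infinitely many values: writing $b=p/q$ in lowest terms, if the only values were $v_1,\dots,v_k$ then every admissible $z$ would be a root of the nonzero polynomial $\prod_{i=1}^k\bigl(p(z)-v_i\,q(z)\bigr)$, forcing $K$ finite. The sets excluded along the way---the zeros of $f^{C_6}_a$, the zeros of $D$, and the at most two $z$ with $b(z)^2+3b(z)+9=0$---are each finite, so infinitely many admissible $z$ survive and produce infinitely many distinct $b$ with the required splitting field. The one genuine obstacle is the hypothesis: Theorem \ref{thC6} is stated for $\mathrm{char}\,K\neq 2,3$, whereas the corollary permits $\mathrm{char}\,K=3$. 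There $a^2+3a+9$ degenerates to $a^2$, which is still nonzero by hypothesis, so the M\"obius step and the counting survive unchanged; what must be supplied separately is the opening reduction, since it currently rests on Theorem \ref{thC6}. I expect re-establishing that reduction in characteristic $3$---if one insists on covering it rather than tacitly assuming $\mathrm{char}\,K\neq 2,3$---to be the only non-routine point, everything else being elementary manipulation over an infinite field.
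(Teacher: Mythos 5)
Your proposal is correct and follows the same route as the paper, which states the corollary as an immediate consequence of the preceding paragraph (Theorem \ref{thC6} (iii) applied to arbitrary $z\in K$ with the finitely many bad $z$ discarded). You supply the one detail the paper leaves implicit --- that $b(z)=a+(a^2+3a+9)/(s(z)-a)$ is a non-constant rational function and hence takes infinitely many values over an infinite field --- and your observation that the stated hypothesis ``char $K\neq 2$'' should really be ``char $K\neq 2,3$'' (the automorphism $z\mapsto(z-1)/(z+2)$ degenerates in characteristic $3$) is a fair catch of what is evidently a slip in the statement.
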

However, by applying Siegel's theorem for curves of genus $0$ (cf. \cite[Theorem 6.1]{Lan78}, 
\cite[Chapter 8, Section 5]{Lan83}) to Theorem \ref{thC6} (iii), we get
\begin{corollary}\label{cor3}
Let $K$ be a number field and $\mathcal{O}_K$ the ring of integers in $K$. 
Assume that $a\in \mathcal{O}_K$ with $a^2+3a+9\neq 0$. 
Then there exist only finitely many integers $b\in\mathcal{O}_K$ such that 
$\mathrm{Spl}_{K} f^{C_6}_b(X)=\mathrm{Spl}_K f^{C_6}_a(X)$. 
In particular, there exist only finitely many integers $b\in\mathcal{O}_K$ 
such that $f_{A_i}^{C_6}(X)$, $(i=1,2)$, has a linear factor over $\bQ$. 
\end{corollary}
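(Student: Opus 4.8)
The plan is to reduce the statement to the finiteness of integral values of a single rational function and then invoke Siegel's theorem in the genus-$0$ case. By Theorem \ref{thC6} (equivalence of (i) and (iii)), for $b\in\mathcal{O}_K$ with $(a-b)(a+b+3)\neq0$ and $b^2+3b+9\neq0$ we have $\mathrm{Spl}_K f^{C_6}_b(X)=\mathrm{Spl}_K f^{C_6}_a(X)$ if and only if there is some $z\in K$ with
\[
B=a+\frac{(a^2+3a+9)\,z(z+1)(z-1)(z+2)(2z+1)}{f_a(z)},\qquad B\in\{b,\,-b-3\},
\]
where $f_a=f^{C_6}_a$. Denote by $\phi_a(z)$ the right-hand side. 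Since $b\in\mathcal{O}_K$ forces both $b$ and $-b-3$ into $\mathcal{O}_K$, this condition says exactly that $\phi_a(z)\in\mathcal{O}_K$ for some $z\in K$. The excluded values---$b\in\{a,-a-3\}$ and the at most two roots in $K$ of $b^2+3b+9=0$---are finite in number and may be set aside at the outset.

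First I would analyse $\phi_a$ as a map $\mathbb{P}^1\to\mathbb{P}^1$ over $K$. Writing $g(z)=z(z+1)(z-1)(z+2)(2z+1)$ (degree $5$), we have $\phi_a(z)-a=(a^2+3a+9)\,g(z)/f_a(z)$ with $f_a$ of degree $6$, so the poles of $\phi_a$ are the zeros of $f_a$ not cancelled by zeros of $g$. The hypothesis $a^2+3a+9\neq0$ makes the discriminant $6^6(a^2+3a+9)^5$ of $f_a$ nonzero, hence $f_a$ is separable with six distinct roots; and a direct check gives $f_a(0)=1$, $f_a(1)=f_a(-2)=-27$, $f_a(-1)=1$, $f_a(-1/2)=-27/64$, all nonzero, so none of the five zeros $0,\pm1,-2,-\tfrac12$ of $g$ is a root of $f_a$. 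As $\deg g<\deg f_a$ there is no pole at $\infty$, so $\phi_a$ has exactly six distinct (simple) poles over $\overline{K}$, in particular more than two.

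Next I would apply Siegel's theorem for curves of genus $0$ (cf. \cite[Theorem 6.1]{Lan78}, \cite[Chapter 8, Section 5]{Lan83}): since $\phi_a\in K(z)$ is nonconstant and has more than two distinct poles, the set of $z\in K$ with $\phi_a(z)\in\mathcal{O}_K$ is finite. Each such $z$ determines the single value $B=\phi_a(z)$, and each $B$ yields at most the two candidates $b=B$ and $b=-B-3$; combined with the reduction above, this gives the first assertion. For the final statement, the Remark after Theorem \ref{thC6} identifies a $K$-rational root $z$ of $f^{C_6}_{A_i}(X)$ with a solution of condition (iii) (for $B=b$ when $i=1$, for $B=-b-3$ when $i=2$); since $\bQ\subseteq K$, any $b$ for which $f^{C_6}_{A_i}(X)$ acquires a linear factor over $\bQ$ already satisfies (iii) and hence lies among the finitely many $b$ just produced.

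The main obstacle is not the diophantine input itself but the geometric bookkeeping that licenses it, namely verifying that $\phi_a$ has at least three distinct poles over $\overline{K}$. Siegel's genus-$0$ finiteness is sharp---a nonconstant function on $\mathbb{P}^1$ with only one or two poles may take infinitely many integral values---so the whole argument hinges on this pole count, which rests on the separability of $f_a$ (from $a^2+3a+9\neq0$) together with the non-vanishing of $f_a$ at the zeros of $g$. Once the six poles are secured, the passages between an integral value of $\phi_a$, the coincidence of splitting fields, and a linear factor of $f^{C_6}_{A_i}(X)$ are the formal translations furnished by Theorem \ref{thC6} and its Remark.
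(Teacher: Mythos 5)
Your proposal is correct and follows exactly the route the paper intends: the paper offers no written proof beyond the remark that the corollary follows ``by applying Siegel's theorem for curves of genus $0$ \ldots to Theorem \ref{thC6} (iii)'', and your argument is precisely that application, with the necessary pole count (six distinct poles of $\phi_a$, secured by $a^2+3a+9\neq 0$ and the nonvanishing of $f_a$ at $0,\pm1,-2,-\tfrac12$) supplied explicitly. The bookkeeping reducing each integral value $B=\phi_a(z)$ to at most two candidates $b$ and the use of the Remark for the ``in particular'' clause are both sound.
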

When $K=\bQ$, by Okazaki's theorem (Theorem \ref{thOka}) and Theorem \ref{thC6} 
we have
\begin{theorem}\label{thisoC6}
Let $L^{(6)}_m=\mathrm{Spl}_\bQ f_m^{C_6}(X)$. 
For $m,n\in\bZ$, $L^{(6)}_m=L^{(6)}_n$ if and only if $m=n$ or $m=-n-3$. 
\end{theorem}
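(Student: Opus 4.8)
The ``if'' direction is immediate: $m=n$ gives equality trivially, and for $m=-n-3$ we use the relation $L^{(6)}_m=L^{(6)}_{-m-3}$ already recorded in Section \ref{sesimsextic} (if $z$ is a root of $f^{C_6}_m$ then $1/z$ is a root of $f^{C_6}_{-m-3}$). For the converse my plan is to exploit the splitting $C_6\cong C_3\times C_2$. For $m\notin\{-8,-3,0,5\}$ the field $L^{(6)}_m$ is cyclic of degree $6$, so it has a unique cubic subfield and a unique quadratic subfield; because $\gcd(2,3)=1$ these are linearly disjoint and their compositum is all of $L^{(6)}_m$, so the pair of subfields determines $L^{(6)}_m$. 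By the computations of Section \ref{sesimsextic} the cubic subfield is the fixed field of $\langle\sigma^3\rangle$, namely $L^{(3)}_m=\mathrm{Spl}_\bQ f^{C_3}_m(X)$, and the quadratic subfield is the fixed field of $\langle\sigma^2\rangle$, namely $\bQ(\sqrt{m^2+3m+9})$ via $(\ref{eqz2s})$. This yields the clean reduction
\[
L^{(6)}_m=L^{(6)}_n \iff L^{(3)}_m=L^{(3)}_n \ \text{ and }\ \bQ(\sqrt{m^2+3m+9})=\bQ(\sqrt{n^2+3n+9}).
\]

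Next I would normalize to $m,n\geq -1$ using $L^{(6)}_m=L^{(6)}_{-m-3}$; note that both $L^{(3)}_m$ and $m^2+3m+9$ are invariant under $m\mapsto -m-3$, so it suffices to prove $m=n$, and unwinding the normalization returns $m=n$ or $m=-n-3$. The special parameters in this range, $m\in\{0,5\}$, degenerate: $f^{C_6}_m$ splits into two simplest cubics (as displayed in Section \ref{sesimsextic}), so $L^{(6)}_m$ is only cubic and cannot equal a genuine degree-$6$ field by a degree count; and the two degenerations $L^{(6)}_0=L^{(3)}_0$ and $L^{(6)}_5=L^{(3)}_{-1}$ are separated by Okazaki's Theorem \ref{thOka}, since $0$ and $-1$ lie in different cubic coincidence classes.

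For the cubic coincidence $L^{(3)}_m=L^{(3)}_n$ I invoke Theorem \ref{thOka}, which confines $m,n$ to $\{-1,0,1,2,3,5,12,54,66,1259,2389\}$ grouped into the coincidence classes $\{-1,5,12,1259\}$, $\{0,3,54\}$, $\{1,66\}$, $\{2,2389\}$. Deleting the special values $0,5$ leaves the classes $\{-1,12,1259\}$, $\{3,54\}$, $\{1,66\}$, $\{2,2389\}$, and by the displayed equivalence it only remains to check that distinct members of one class give distinct quadratic subfields, i.e. that the squarefree parts of $m^2+3m+9$ differ.

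The main obstacle is exactly this last finite verification, which must genuinely succeed: if two members of a single cubic coincidence class shared the squarefree part of $m^2+3m+9$, the theorem would be \emph{false}. Concretely one computes the squarefree part of $m^2+3m+9$ for each listed value --- for instance $7$ at $m=-1$, $21$ at $m=12$, $3$ at $m=3$, $7$ at $m=54$, $13$ at $m=1$, $3$ at $m=66$, $19$ at $m=2$ --- and checks them pairwise inside every class; for the two large entries $m=1259,2389$ it suffices that the squarefree part avoids the small target values $7,21$ and $19$ respectively, which a direct factorization confirms. Since within each class no two members share a quadratic subfield, the reduction forces $m=n$, completing the converse. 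As an independent cross-check one may instead feed each candidate pair into Theorem \ref{thC6}, testing whether $f^{C_6}_{A_1}$ or $f^{C_6}_{A_2}$ acquires a rational root; this reproduces the same conclusion.
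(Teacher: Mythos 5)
Your proof is correct, but it finishes by a genuinely different route from the paper's. Both arguments share the same key reduction: since $L^{(3)}_m$ is the cubic subfield of $L^{(6)}_m$, Okazaki's Theorem \ref{thOka} confines any nontrivial coincidence (after normalizing to $-1\le m<n$) to the eleven listed parameters, grouped into four cubic coincidence classes. The paper then disposes of each surviving pair by factoring the multi-resolvent sextics $f_{A_i}^{C_6}(X)$ over $\bQ$ (Table $2$) and invoking Theorem \ref{thC6}: in every case the relevant resolvent splits into three quadratics rather than six linear factors, so the sextic fields differ. You instead use the canonical splitting of a cyclic sextic field as the compositum of its unique cubic and unique quadratic subfields, identify the quadratic subfield as $\bQ(\sqrt{m^2+3m+9})$ --- which follows from (\ref{eqz2s}) upon specialization, or directly from the discriminant $6^6(m^2+3m+9)^5$ since for a $C_6$-sextic the quadratic subfield is generated by the square root of the discriminant --- and then separate the members of each class by the squarefree part of $m^2+3m+9$. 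I checked your arithmetic: the squarefree parts within the classes $\{-1,12,1259\}$, $\{3,54\}$, $\{1,66\}$, $\{2,2389\}$ are $(7,21,427)$, $(3,7)$, $(13,3)$, $(19,1273)$ respectively (note $1259^2+3\cdot 1259+9=7\cdot 61^3$ and $2389^2+3\cdot 2389+9=19\cdot 67^3$), pairwise distinct within each class, and your treatment of the degenerate parameters $m\in\{0,5\}$ (degree count, plus $L^{(6)}_0=L^{(3)}_0\ne L^{(3)}_{-1}=L^{(6)}_5$ by Theorem \ref{thOka}) is sound. Your finish is more elementary --- it needs only integer factorization and no resolvent computations --- whereas the paper's finish exercises Theorem \ref{thC6}, the general splitting criterion that is the technical heart of the paper and applies over an arbitrary base field.
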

\begin{proof}
We should check the assertion only for $-1\leq m<n$ and 
$m,n\in\{-1$, $0$, $1$, $2$, $3$, $5$, $12$, $54$, $66$, $1259$, $2389\}$ 
because $L_m^{(3)}$ is the cubic subfield of $L_m^{(6)}$ and hence 
$L_m^{(6)}=L_n^{(6)}$ implies $L_m^{(3)}=L_n^{(3)}$ (cf. Theorem \ref{thOka}). 
The irreducible factorization of the corresponding multi-resolvent polynomials 
$\mathcal{R}^i_{f_{m,n}}(X)=f_{A_i}^{C_6}(X)$ over $\bQ$ are given as on Table $2$. 
\begin{center}
{\rm Table} $2$\vspace*{5mm}\\
{\renewcommand\arraystretch{1.2}
\begin{tabular}{|c|c|c|c|}\hline
$m$&$n$&$i$& irreducible factorization of $f_{A_i}^{C_6}(X)$ over $\bQ$\\\hline
$-1$&$5$&$1$&$(X^2-4X-3)(X^2+3X+\frac{1}{2})(X^2+\frac{2X}{3}-\frac{2}{3})$\\
$-1$&$12$&$2$&$(X^2-2X-2)(X^2+4X+1)(X^2+X-\frac{1}{2})$\\
$-1$&$1259$&$1$&$(X^2-\frac{5}{2}X-\frac{9}{4})(X^2+\frac{18}{5}X+\frac{4}{5})
(X^2+\frac{8}{9}X-\frac{5}{9})$\\
$5$&$12$&$2$&$(X^2-8X-5)(X^2+\frac{5}{2}X+\frac{1}{4})(X^2+\frac{2}{5}X-\frac{4}{5})$\\
$5$&$1259$&$1$&$(X^2-\frac{38}{3}X-\frac{22}{3})(X^2+\frac{44}{19}X+\frac{3}{19})
(X^2+\frac{3}{11}X-\frac{19}{22})$\\
$12$&$1259$&$2$&$(X^2-26X-14)(X^2+\frac{28}{13}X+\frac{1}{13})
(X^2+\frac{1}{7}X-\frac{13}{14})$\\\hline
$0$&$3$&$2$&$(X^2-2X-2)(X^2+4X+1)(X^2+X-\frac{1}{2})$\\
$0$&$54$&$1$&$(X^2-4X-3)(X^2+3X+\frac{1}{2})(X^2+\frac{2}{3}X-\frac{2}{3})$\\
$3$&$54$&$2$&$(X^2-8X-5)(X^2+\frac{5}{2}X+\frac{1}{4})
(X^2+\frac{2}{5}X-\frac{4}{5})$\\\hline
$1$&$66$&$2$&$(X^2-5X-\frac{7}{2})(X^2+\frac{14}{5}X+\frac{2}{5})
(X^2+\frac{4}{7}X-\frac{5}{7})$\\\hline
$2$&$2389$&$2$&$(X^2-7X-\frac{9}{2})(X^2+\frac{18}{7}X+\frac{2}{7})
(X^2+\frac{4}{9}X-\frac{7}{9})$\\\hline
\end{tabular}
}
\end{center}
Although we already know $L_0^{(6)}=L_0^{(3)}\neq L_5^{(3)}= L_5^{(6)}$, 
we do not omit the degenerate cubic case $m,n\in\{0,5\}$ on Table $2$. 
By Theorem \ref{thDecom}, the other sextic multi-resolvent polynomial 
$\mathcal{R}^j_{f_{m,n}}(X)=f_{A_j}^{C_6}(X)$, $(j\in\{1,2\}, j\neq i)$, 
is irreducible over $\bQ$. 
By Theorem \ref{thC6}, we conclude that the overlap $L_m^{(6)}=L_n^{(6)}$ 
occurs only for the trivial cases $m=n$ and $m=-n-3$. 
\end{proof}

%%%%%%%%%%%%%%%%%%%%%%%%%%%%%%%%%%%%%%%%%%%%%%%%%%%%%%%%%%%%%%%%%%%%%%%%%%%%%%%%%%%%%%%%%%%%
%
\section{Correspondence}\label{secores}
%
%%%%%%%%%%%%%%%%%%%%%%%%%%%%%%%%%%%%%%%%%%%%%%%%%%%%%%%%%%%%%%%%%%%%%%%%%%%%%%%%%%%%%%%%%%%%

The aim of this section is to establish the correspondence between 
isomorphism classes of the simplest sextic fields $L^{(6)}_m$ and 
non-trivial solutions to sextic Thue equations $F_m(x,y)=\lambda$ 
where $\lambda$ is a divisor of $27(m^2+3m+9)$ as follows 
(cf. cubic case \cite{H1} and quartic case \cite{H2}): 
\begin{theorem}\label{th2}
Let $m\in\bZ$ and $L^{(6)}_m=\mathrm{Spl}_\bQ f^{C_6}_m(X)$. 
There exists an integer $n\in\bZ\setminus\{m,-m-3\}$ such that 
$L^{(6)}_n=L^{(6)}_m$ if and only if there exists non-trivial solution 
$(x,y)\in\bZ^2$, i.e. $xy(x+y)(x-y)(x+2y)(2x+y)\neq 0$, to $F_m(x,y)=\lambda$ 
where $\lambda$ is a divisor of $27(m^2+3m+9)$. 
\end{theorem}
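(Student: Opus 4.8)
The plan is to use Theorem \ref{thC6}(iii) as a dictionary between the field equality $L^{(6)}_n=L^{(6)}_m$ and rational points of the associated curve, and to translate those points into integral solutions of $F_m(x,y)=\lambda$. Throughout fix $m\in\bZ$, set $D:=m^2+3m+9>0$ and $P:=xy(x+y)(x-y)(x+2y)(2x+y)$. Since $f^{C_3}_m(X)$ is irreducible over $\bQ$ for every $m\in\bZ$, the cubic field $L^{(3)}_m$ is a genuine subfield of $L^{(6)}_m$, and hence $\mathrm{Gal}_\bQ f^{C_6}_m(X)\cong C_6$ or $C_3$ for all $m$. The first step is the master identity: writing a rational number in lowest terms as $z=x/y$ with $\gcd(x,y)=1$ and using $f^{C_6}_m(x/y)=F_m(x,y)/y^6$ in the formula of Theorem \ref{thC6}(iii) gives, with $\lambda:=F_m(x,y)$,
\[
\lambda\,(B-m)=D\,xy(x+y)(x-y)(x+2y)(2x+y)=D\,P.
\]
In particular $B=m$ if and only if $P=0$, i.e.\ exactly when $(x,y)$ is trivial; this already pairs trivial solutions with the excluded parameter value.

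The core of the argument is the equivalence $\lambda\mid 27D\iff B\in\bZ$, proved prime by prime. Reducing $F_m$ along the six lines $x=0,\ y=0,\ x+y=0,\ x-y=0,\ x+2y=0,\ 2x+y=0$ and using the values $F_m(0,\pm1)=F_m(\pm1,0)=F_m(\pm1,\mp1)=1$ and $F_m(\pm1,\pm1)=F_m(\pm2,\mp1)=F_m(\pm1,\mp2)=-27$ recorded in the introduction, one sees that for coprime $(x,y)$ no prime $p\neq3$ divides both $\lambda$ and $P$. The decisive point is then the sharp valuation $v_3(\lambda)\in\{0,3\}$. Modulo $3$ the polynomial $f^{C_6}_m$ has its only projective root at $X\equiv1$ (indeed $f^{C_6}_m(0)\equiv f^{C_6}_m(2)\equiv1$ and $f^{C_6}_m(1)\equiv0\pmod3$), so $3\mid\lambda$ forces $x\equiv y\pmod3$. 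In that case $\phi(x,y):=(2x+y,-x+y)$, for which $F_m(\phi(x,y))=-27F_m(x,y)$, has $\gcd(2x+y,-x+y)=3$; the reduced pair $(x'',y''):=\bigl(\tfrac{2x+y}3,\tfrac{-x+y}3\bigr)$ is again coprime and satisfies $F_m(x'',y'')=-\lambda/27$. Since $x''-y''=x$ and $\gcd(x,y)=1$ together with $x\equiv y\pmod3$ force $3\nmid x$, we get $x''\not\equiv y''\pmod3$, whence $3\nmid F_m(x'',y'')$ and therefore $v_3(\lambda)=3$ exactly. Feeding this back into $\lambda(B-m)=DP$ yields $\lambda\mid27D$ when $B\in\bZ$ (for $p=3$ use $v_3(\lambda)\le3$), and conversely $\lambda\mid DP$ when $\lambda\mid27D$ (for $p=3$ use that $x\equiv y\pmod3$ makes $27\mid(x-y)(x+2y)(2x+y)\mid P$).

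With the divisibility in hand both implications follow, once an integral solution is reduced to a primitive one: dividing through by $\gcd(x,y)^6$ preserves both non-triviality ($P\neq0$) and the condition $\lambda\mid27D$. Given a non-trivial primitive solution with $\lambda\mid27D$, the above produces $B\in\bZ$ with $B\neq m$, and Theorem \ref{thC6}(iii) gives $L^{(6)}_B=L^{(6)}_m$; conversely an integer $n\notin\{m,-m-3\}$ with $L^{(6)}_n=L^{(6)}_m$ produces through Theorem \ref{thC6}(iii) a rational $z=x/y$ with $B\in\{n,-n-3\}$, so $B\neq m$, whose primitive model is a non-trivial solution with $\lambda\mid27D$.

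I expect the main obstacle to be ruling out the exceptional value $B=-m-3$: a non-trivial solution could a priori return $B=-m-3$, giving only the excluded $n\in\{m,-m-3\}$ and so breaking the equivalence. I would settle this with the last clause of Theorem \ref{thC6}: because $\mathrm{Gal}_\bQ f^{C_6}_m\cong C_6$ or $C_3$, the relation of Theorem \ref{thC6}(iii) is solvable for exactly one of the two target values $m$ and $-m-3$ of $B$; since $z=0$ already realizes $B=m$, the value $B=-m-3$ cannot occur, so the $n$ produced above always lies outside $\{m,-m-3\}$. The one genuinely delicate ingredient is the exact three-adic valuation $v_3(\lambda)\in\{0,3\}$, which is what pins down the precise constant $27$ in the divisor $27(m^2+3m+9)$.
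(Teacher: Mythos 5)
Your proof is correct, and its overall skeleton coincides with the paper's: both directions are routed through Theorem \ref{thC6}(iii), the value $N=m$ is excluded by non-triviality of the solution, $N=-m-3$ is excluded by the uniqueness clause of Theorem \ref{thC6} together with $\mathrm{Gal}_\bQ f^{C_6}_m\cong C_6$ or $C_3$, and the integrality of $N$ in the converse rests on the same mod-$3$ dichotomy ($x\equiv y\pmod 3$ forces $27\mid P$, otherwise $3\nmid F_m(x,y)$). Where you genuinely diverge is the key step that a primitive pair with $N\in\bZ$ satisfies $F_m(x,y)\mid 27(m^2+3m+9)$. The paper obtains this from the resultant $\mathrm{Res}_z\bigl(h(z),f^{C_6}_m(z)\bigr)=-3^9(m^2+3m+9)^6$ and an explicit B\'ezout identity $h\,p+f^{C_6}_m\,q=27(m^2+3m+9)$ read off the Sylvester matrix; homogenizing and exploiting the $C_6$-invariance to evaluate at both $(x,y)$ and $(x+y,-x)$ gives $F_m(x,y)\mid 27(m^2+3m+9)y^{11}$ and $F_m(x,y)\mid 27(m^2+3m+9)x^{11}$, whence the claim by coprimality. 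You instead argue prime by prime: for $p\neq 3$ the values $F_m(0,1)=F_m(1,0)=F_m(1,-1)=1$ and $F_m(1,1)=F_m(-2,1)=F_m(1,-2)=-27$ show that $p$ cannot divide both $F_m(x,y)$ and $P$ at a primitive point, and the covariant $(x,y)\mapsto(2x+y,-x+y)$ with $F_m(2x+y,-x+y)=-27F_m(x,y)$ pins down $v_3(F_m(x,y))\in\{0,3\}$ exactly. Both are sound; the paper's route is the mechanical genus-zero/resultant method and requires computing the cofactors $p(z),q(z)$, while yours is more elementary, yields the cleaner two-sided equivalence $F_m(x,y)\mid 27(m^2+3m+9)\iff N\in\bZ$ within one framework, and explains conceptually why the constant is exactly $27$ (it is the precise $3$-part of $F_m$ at a primitive point with $x\equiv y\pmod 3$). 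One small caveat, which you share with the paper: the ``exactly one of $B=m$, $B=-m-3$'' step applies the last clause of Theorem \ref{thC6} in the boundary case $a=b=m$, outside that theorem's stated hypothesis $(a-b)(a+b+3)\neq 0$; this is harmless (it is the $3,3$ versus $1,1,1,1,1,1$ alternative visible in Table $1$), but it deserves an explicit sentence.
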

\begin{proof}
We apply Theorem \ref{thC6} to the case $K=\bQ$. 

Assume that $L^{(6)}_m=L^{(6)}_n$ for $n\in\bZ\setminus\{m,-m-3\}$. 
Then by Theorem \ref{thC6} (iii) with $z=x/y$, there exist $x,y\in\bZ$ with 
$\mathrm{gcd}(x,y)=1$ such that 
\begin{align}
N\,=\,m+\frac{(m^2+3m+9)xy(x+y)(x-y)(x+2y)(2x+y)}{F_m(x,y)}\in\bZ\label{eqNmZ}
\end{align}
where either $N=n$ or $N=-n-3$. 
The condition (\ref{eqNmZ}) occurs for only one of $N=n$ and $N=-n-3$ because 
only one of $f_{A_1}^{C_6}(X)$ and $f_{A_2}^{C_6}(X)$ in Theorem \ref{thC6} (ii) 
has a linear factor over $\bQ$. 
By (\ref{eqNmZ}), the assumption $n\in\bZ\setminus\{m,-m-3\}$ implies 
$xy(x+y)(x-y)(x+2y)(2x+y)\neq 0$.

Now we should show that $F_m(x,y)$ divides $27(m^2+3m+9)$. 

We use a standard method via resultant and the Sylvester matrix 
(cf. \cite{PV00}, \cite[Section 1.3]{SWP08}, see also \cite[Theorem 6.1]{Lan78}, 
\cite[Chapter 8, Section 5]{Lan83}). 
Put 
\[
h(z):=(m^2+3m+9)z(z+1)(z-1)(z+2)(2z+1)
\]
and take $f^{C_6}_m(z)=F_m(z,1)$. 
We take the resultant
\[
R_m:=\mathrm{Res}_z(h(z),f^{C_6}_m(z))=-3^9(m^2+3m+9)^6
\]
of $h(z)$ and $f^{C_6}_m(z)$ with respect to $z$. 
The resultant $R_m$ is also given by the determinant of the following modified 
Sylvester matrix of size $11\times 11$:  
\begin{align*}
S'(h,f_m^{C_6})=\left[
\begin{array}{lllllll}
 a_5 & a_4 & \cdots & a_0 & 0 & h(z)z^5 \\
 0 & \ddots & \ddots & \cdots & \ddots & \vdots \\
 0 & 0 & a_5 & a_4 & \cdots & h(z) \\
 b_6 & b_5 & \cdots & b_0 & 0 & f_m^{C_6}(z)z^4 \\
 0 & \ddots & \ddots & \cdots & \ddots & \vdots \\
 0 & 0 & b_6 & b_5 & \cdots & f_m^{C_6}(z) \\
\end{array}
\right]
\end{align*}
where $h(z)=\sum_{i=0}^5 a_iz^i$, $f^{C_6}_m(z)=\sum_{i=0}^6 b_iz^i$.
By the cofactor expansion along the 11th column of the matrix $S'(h,f^{C_6}_m)$, we have 
\begin{align}
h(z)(A_1z^5+\cdots+A_5z+A_6)+f^{C_6}_m(z)(A_7z^4+\cdots+A_{10}z+A_{11})=R_m\label{eqA11}. 
\end{align}
Dividing the both sides of (\ref{eqA11}) by 
$-\mathrm{gcd}(A_1,\ldots,A_{11})=-3^6(m^2+3m+9)^5$, we have 
\begin{align*}
h(z)p(z)+f^{C_6}_m(z)q(z)=27(m^2+3m+9)
\end{align*}
where
\begin{align*}
p(z)&=84z^5-42(4m+1)z^4-112(3m+11)z^3\\
&\quad +7(22m-153)z^2+2(161m+219)z+27m+242,\\
q(z)&=(m^2+3m+9)(-168z^4-336z^3+154z^2+322z+27).
\end{align*}
Put $H(x,y):=y^6 h(x/y)$, $P(x,y):=y^5 p(x/y)$, $Q(x,y):=y^5 q(x/y)$. 
Then it follows from $z=x/y$ and $F_m(x,y)=y^6 f^{C_6}_m(x/y)$ that 
\begin{align*}
H(x,y)P(x,y)+F_m(x,y)Q(x,y)=27(m^2+3m+9)y^{11}.
\end{align*}
Hence by (\ref{eqNmZ}) we have
\begin{align*}
\frac{H(x,y)P(x,y)}{F_m(x,y)}+Q(x,y)=\frac{27(m^2+3m+9)y^{11}}{F_m(x,y)}\in\bZ.
\end{align*}
Because the sextic forms $F_m(X,Y)$ and $H(X,Y)$ are invariants under the action of 
$\sigma\,:\, X\mapsto X+Y$, $Y\mapsto -X$, we may also get 
\begin{align*}
\frac{H(x,y)P(x+y,-x)}{F_m(x,y)}+Q(x+y,-x)
=\frac{27(m^2+3m+9)(-x)^{11}}{F_m(x,y)}\in\bZ.
\end{align*}
We conclude that $F_m(x,y)$ divides $27(m^2+3m+9)$ because $\mathrm{gcd}(x,y)=1$.

Conversely if there exists $(x,y)\in\bZ^2$ with $xy(x+y)(x-y)(x+2y)(2x+y)\neq 0$ 
such that $F_m(x,y)$ divides $27(m^2+3m+9)$ then we can take 
\begin{align*}
N\,=\,m+\frac{(m^2+3m+9)xy(x+y)(x-y)(x+2y)(2x+y)}{F_m(x,y)}\in\bQ\setminus\{m\}
\end{align*}
which satisfies $L^{(6)}_N=L^{(6)}_m$ by Theorem \ref{thC6}. 
It follows from $\mathrm{Gal}_\bQ f^{C_6}_m(X)\cong C_6$ or $C_3$ that 
$N\neq -m-3$ (see also Table $1$). 
Hence we have $N\in\bQ\setminus\{m,-m-3\}$. 

We see that $N\in\bZ$ as follows: 
If $x\equiv y\pmod{3}$ then $xy(x+y)(x-y)(x+2y)(2x+y)\equiv 0\pmod{27}$. 
Hence we have $N\in\bZ\setminus\{m,-m-3\}$. 

By a direct calculation, we obtain that if $x\not\equiv y\pmod{3}$ 
then $F_m(x,y)\equiv 1\pmod{3}$. 
Hence $F_m(x,y)$ divides $m^2+3m+9$ and $N\in\bZ\setminus\{m,-m-3\}$. 
\end{proof}

%%%%%%%%%%%%%%%%%%%%%%%%%%%%%%%%%%%%%%%%%%%%%%%%%%%%%%%%%%%%%%%%%%%%%%%%%%%%%%%%%%

%\begin{acknowledgment}
%This work is partially supported by Rikkyo University Special Fund for Research. 
%\end{acknowledgment}

%%%%%%%%%%%%%%%%%%%%%%%%%%%%%%%%%%%%%%%%%%%%%%%%%%%%%%%%%%%%%%%%%%%%%%%%%

\vspace*{10mm}
\noindent
Akinari Hoshi\\
Department of Mathematics\\
Rikkyo University\\ 
3--34--1 Nishi-Ikebukuro Toshima-ku\\ 
Tokyo, 171--8501, Japan\\
E-mail: \texttt{hoshi@rikkyo.ac.jp}\\
Web: \texttt{http://www2.rikkyo.ac.jp/web/hoshi/}

\end{document}